\renewcommand{\b}[1]{\ensuremath{\mathbf{#1}}} 
\newcommand{\bfx}{{\bm{x}}}
\newcommand{\bfy}{{\bm{y}}}
\newcommand{\bfkappa}{{\bm{\kappa}}}
\newcommand{\Real}{{\rm Re\,}}
\providecommand{\Norm}[1]{\left\lVert#1\right\rVert}
\providecommand{\abs}[1]{\lvert#1\rvert}
\DeclareMathOperator*{\argmin}{argmin}
\newtheorem{theorem}{Theorem}
\newtheorem{definition}{Definition}
\newtheorem{remark}{Remark}
\newtheorem{corollary}{Corollary}
\newcommand{\I}{\mathrm{i}}
\newcounter{iabc}
\newenvironment{abclist}{
\begin{list}
	{\alph{iabc})}
	{\usecounter{iabc}}
	}
{\end{list}
}
\begin{document}

\title{Texture Generation for Photoacoustic Elastography}

\author[1]{Thomas Glatz}
\author[1,2]{Otmar Scherzer}
\author[1]{Thomas Widlak}

\affil[1]{\footnotesize Computational Science Center, University of Vienna, Oskar-Morgenstern-Platz\ 1, 1090 Vienna, Austria}
\affil[2]{Radon Institute of Computational and Applied Mathematics, Austrian Academy of Sciences, Altenberger Str.\ 69, 4040 Linz, Austria}

\maketitle

\begin{abstract}
\noindent
Elastographic imaging is a widely used technique which can in principle be implemented on top of every imaging modality. 
In elastography, the specimen is exposed to a force causing local displacements in the probe, 
and imaging is performed before and during the displacement experiment. 
From the computed displacements material parameters can be deduced, which in turn can be used for clinical diagnosis. 
Photoacoustic imaging is an emerging image modality, which exhibits functional and morphological contrast. 
However, opposed to ultrasound imaging, for instance, it is considered a modality which is not suited for elastography, 
because it does not reveal speckle patterns. However, this is somehow counter-intuitive, 
because photoacoustic imaging makes available the whole frequency spectrum as opposed to single frequency standard ultrasound imaging. 
In this work, we show that in fact artificial speckle patterns can be introduced by using only a band-limited part of the measurement data. 
We also show that after introduction of artificial speckle patterns, deformation estimation can be implemented more reliably in photoacoustic imaging.
\end{abstract}

\noindent
\textbf{Keywords: }Elastography, Photoacoustic Imaging, Texture

\section{Introduction}
\label{intro}
Elastography is an imaging technology based on biomechanical contrast; 
among its current clinical applications are early detection of skin, breast and prostate cancer, 
detection of liver cirrhosis, and characterization of artherosclerotic plaque in vascular imaging 
(see for instance \cite{Doy12,ParDoyRub11,WojFarWebThoFis10,AigPalSchoLebJun11,WanChanHunEngTun09,WooRomLerPanBro06,BisPatParCicRic10}). 

Typically, elastography is implemented as an \emph{on top imaging method} 
to various existing imaging techniques, such as ultrasound imaging (see for instance \cite{LerParHolGraWaa88,OphCesPonYazLi91}), 
magnetic resonance imaging (see for instance \cite{MutLomRosGreMan95,ManOliDreMahKru01}) 
or optical coherence tomography (see for instance \cite{SunStaYan11,NahBauRouBoc13}). 
With all these techniques, it is possible to visualize momentum images, 
from which mechanical displacement $\b u$ can be calculated, which forms the basis of clinical examinations.

For motion estimation in \emph{ultrasound elastography (USE)}, \emph{optical coherence elastography (OCE)} 
and in certain variants of \emph{magnetic resonance elastography (MRE)}, 
common techniques are optical flow and motion tracking algorithms \cite{ParDoyRub11,BohGeiAndGebTra00,Schm98,PraBha14,PriMcV92,FuChuiTeoKob11}; 
in USE and OCE, these are specifically referred to as \emph{speckle tracking methods}. 
Speckle tracking can only be realized if the imaging data contains a high amount of correlated pattern information. 
This is the predominant structure in ultrasound imaging.

Photoacoustic imaging is an emerging functional and morphological imaging technology, 
which, for instance, is particularly suited for imaging of vascular systems \cite{Bea11,NusSlePal14,LiWan09}. 
Opposed to ultrasound imaging, photoacoustic imaging is considered to reveal little speckle patterns \cite{LiWan06}, 
which is considered an advantage for imaging but a disadvantage for elastography. 
Passive coupling of photoacoustic imaging and elastography has been reported in \cite{EmeAglShaSetSco04}, 
where the contrast of photoacoustic imaging, ultrasound, and US-elastography has been fused (see also \cite[sec.4.9]{ParDoyRub11}). 
Active coupling of photoacoustic and elastography has not been reported so far. 
The reason for that is that motion estimation and speckle tracking 
cannot be implemented reliably because of homogeneous regions in monospectral photoacoustic imaging, 
which do not allow for detection of microlocal displacements.

In this paper, we provide a mathematically founded way of introducing speckles in photoacoustic imaging data. 
Theoretically, photoacoustic imaging is based on the assumption 
that the whole frequency spectrum can be measured with the detectors. 
Common ultrasound imaging, on the contrary, is operating with a fixed single frequency mode. 
This superficial comparison motivates us to investigate, using \cite{Hal11,HalSchZan09a}, the effect band-limited measurements have on the inversion.
 In fact, as we show by mathematical consideration, 
 the use of band-limited data enforces speckling-like patterns in the reconstructions. 
 Our suggested approach then consists then of carefully choosing a frequency band of measurements and back-projecting these data. 
 Because these data is speckled, it can be used to support tracking and optical flow techniques for displacement estimations.

The structure of the article is as follows:
We first review the principles of elastography in section~\ref{sec:Elastography}. 
In section~\ref{sec:PAI} we review the principles of photoacoustic imaging. 
Then, in section~\ref{sec:Texture}, we describe the methods to create texture patterns in photoacoustic imaging. 
In section~\ref{sec:MotionEst}, the methods of motion estimation for photoacoustic elastography are described, and 
in section~\ref{sec:Experiments}, we show the results of imaging experiments. 
The paper ends with a discussion (section~\ref{sec:Discussion}).

\section{Elastographic imaging}
\label{sec:Elastography}
In this section we explain the basic principles of elastography. In theory, elastography can be implemented on top of any imaging technique. 
Below, we review mathematical models which are used for qualitative elastography.

\subsection{Experiments and measurement principle}
According to \cite{Doy12}, elastography consists of the following consecutive steps:
\label{subsec:Elastography}
	\begin{enumerate}
		\item The specimen is exposed to a mechanical source. Imaging is performed before and during source exposition.
		\item {\bf Qualitative elastography:} From the images the tissue displacement $\b u$ is determined.
		\item {\bf Quantitative elastography:} Mechanical properties are computed from the displacement $\b u$.
	\end{enumerate}
In the literature there have been documented various ways to perturb the tissue, 
such as quasi-static, transient and time-harmonic excitation. 

In this paper we focus on qualitative elastography in the quasi-static case, which is reviewed below.

\subsection{Quasi-static qualitative elastography}
\label{sec:Quasi_static_elast}
Although it is theoretically possible to perform quantitative imaging all at once, 
in practice, qualitative imaging is performed beforehand. 
Depending on the used modalities different models are used for qualitative elastography 
(see for instance \cite{ParDoyRub11}):

We start from images $f(\b x,t)$, which are recorded before and during the mechanical excitation. 
These images can be B-scan data in US-imaging, MRI magnitude images, OCT images, 
or in principle, images from any modality \cite{ParDoyRub11,WasMig04}.

A common model then is to assume that 
\begin{equation}
\label{eq:Advektion}
f(\b x(t),t)= const.
\end{equation}
for every time $t$, assuming that the intensities are transported along the trajectories $\b x(t)$. according to the vector field $\b u(\b x)$.
A model such as \eqref{eq:Advektion} can serve as a basis for an image registration model to recover the displacement $\b u=\dot{\b x}(t)$ from $f$ (as in \cite{LedKybDesSanSuh05} 
for detection of the movement of the heart). For smaller displacements typically encountered in elastography, 
the constraint \eqref{eq:Advektion} can be linearized
\begin{equation}
\label{eq:OptFlow}
	\nabla f \cdot \b u + f_t = 0\;,
\end{equation}
which can serve as a basis for inversion.

In a \emph{quasi-static} experiment, there are two images: 
before and after the mechanical excitation from the exterior, 
which we denote as $f_1(\b x)=f(\b x,t_1)$ and $f_2(\b x)=f(\b x,t_2)$. 
We are calculating the spatial dependent flow $\b u(\b x)$ only. 
In this case we are solving the semi-continuous equation:
\begin{equation}
\label{eq:OptFlow_sc}
	\nabla f_1 \cdot \b u + (f_2-f_1) = 0\;.
\end{equation}
The equation is underdetermined for $\b u$ and therefore regularization has to be involved for stable solution.

Typically, there are some constraints here as regularization as in the Horn-Schunck model \cite{HorSchu81}:
that is 
\begin{equation}
\label{eq:HS}
 \b u = \argmin_{\mathbf{v}} \Norm{\nabla_{\bf x} f \cdot \mathbf{v} + f_t}^2_{L^2(\Omega)} + \lambda \int_\Omega \abs{\nabla_{\b x} \mathbf{v}}^2 \, \mathrm{d}\mathbf{x}
\end{equation}
 
Other choices of regularization and data terms are possible. 

An alternative to the optical flow approach is block matching \cite{BohGeiAndGebTra00}. Here, one assumes that the displacement is constant in defined regions; using a target block, one compares the image patterns in subsequent frames by using a correlation measure.

We emphasize that all these techniques assume that texture is present in the image.

In MRI, it was observed that part of tissue motion is invisible in magnitude images because of homogeneous regions.
 To overcome this limitation, artificial tags have been introduced in the image \cite{PriMcV92,FuChuiTeoKob11}. 
 These make motion estimation possible in regions where no intensity is initially present.

In ultrasound imaging and optical coherence tomography, texture is provided by patterns in the images referred to as speckle. 
These are correlated texture patterns which provide a signature of the points. 
Therefore, motion estimation techniques in USE or OCE are sometimes comprehensively denoted as \emph{speckle tracking} algorithms \cite{RevMirMcn05,ZakQinMau10}. Often, the term \emph{speckle tracking} is only used for the block-matching-type algorithms \cite{BohGeiAndGebTra00,PanGaoTaoLiuBai14}.

In the next section, we review photoacoustic imaging and its image model. 
In section \ref{sec:Texture}, we investigate how the band-limitation 
effect will create such a speckle-like texture pattern in photoacoustic image data.

\section{Photoacoustic imaging}
\label{sec:PAI}

Photoacoustic imaging (PAI) is among the most prominent coupled-physics techniques \cite{ArrSch12}. 
It operates with laser excitation and records acoustic pressure, as the coupled modality. 
We first review the imaging formation in PAI. 

\subsection{Mathematical modeling}
\label{sec:2}
Commonly, in Photoacoustics, the wave equation is used to describe the propagation of the acoustic pressure $p$:
\begin{align}\label{eq:wav_gen}
\begin{aligned}
p_{tt}-\Delta_x p\,&=\,I_t f,\qquad\text{in }\mathbb R^n\times(0,T],\\
p\,&=\,0,\qquad\text{in }\mathbb R^n\times(-\infty,0).\\
\end{aligned}
\end{align}
The function $I$ models the laser excitation and is usually considered a time dependent $\delta$-distribution. 
The function $f$ represents the capability of the medium to transfer electromagnetic waves into pressure waves; 
$f$ is material dependent and is visualized in photoacoustic imaging.

Details of deduction of \eqref{eq:wav_gen} from the Euler equations and the 
diffusion equation of thermodynamics can be found for instance in \cite{SchGraGroHalLen09}.

If we assume the excitation to be perfectly focused in time (that is $I(t)=\delta(t)$), 
equation (\ref{eq:wav_gen}) can be reformulated as a homogeneous initial value problem \cite{SchGraGroHalLen09}
\begin{align}\label{eq:wav_delta}
\begin{aligned}
p_{tt}-\Delta_x p\,&=\,0,\qquad\text{in }\mathbb R^n\times(0,\infty),\\
p(t=0)\,&=\,f,\qquad\text{in }\mathbb R^n,\\
p_t(t=0)\,&=\,0,\qquad\text{in }\mathbb R^n.
\end{aligned}
\end{align}

This (direct) problem is well-posed under suitable smoothness assumptions on $f$ (see, e.g., \cite{Eva98}). 
We denote by
\begin{align}\label{eq:pai1a}
\begin{aligned}
\mathcal Pf(\bfx,t)\,=\,p(\bfx,t),\quad \bfx\in\mathbb R^n,~t\in(0,\infty)\,,
\end{aligned}
\end{align}
the operator that maps the initial pressure $f$ to the solution of \eqref{eq:wav_delta}.

\begin{remark}\label{rem:extension}
Since we want to apply a convolution to our solution $p$, 
we have to extend it to negative values of $t$ in a way that the wave equation \eqref{eq:wav_delta} is still fulfilled. 
We distinguish the causal extension $\mathcal Pf=0$ for $t< 0$ (that we denote again with the letter $\mathcal P$), 
and the even extension
\begin{align}\label{eq:PAI_even}
\mathcal{P}_\text{even}f(\bfx,t):=\left\{
\begin{array}{cll} 
&\mathcal{P} f(\bfx,t),\quad &t \geq 0,\\
&\mathcal{P} f(\bfx,-t),\quad &t<0.
\end{array}\right.
\end{align}
\end{remark}
\subsection{Photoacoustic imaging as an inverse problem}
\label{sec:3}

In Photoacoustics, we assume the pressure to be measured on a surface $\Gamma$ over time. 
The inverse problem now consists of reconstructing the initial pressure $f$ in \eqref{eq:wav_delta} by these data, 
ideally given as trace of the solution on $\Gamma$. 
For the sake of simplicity of notation, we are denoting this operator by
\begin{align}\label{eq:pai1b}
\begin{aligned}
\mathcal Pf\,=\,p\big|_{\Gamma\times(0,\infty)}
\end{aligned}
\end{align}
as well. Here, $\mathcal P$ is mapping $f$ to the trace of the solution $p$ of \eqref{eq:wav_delta} at the surface $\Gamma$.
The \emph{Photoacoustic inverse problem} consists in solving equation \eqref{eq:pai1b} for $f$.

This problem obtains a unique solution, provided $\Gamma$ is a so-called uniqueness set 
(for a review over existing results see \cite{KucKun08}). 
These uniqueness sets contain the case of a closed measurement surface surrounding the Photoacoustic source.

For some of the most important simple geometrical shapes of closed manifolds $\Gamma$, 
there exist analytical reconstruction formulae of series expansion and/or filtered\- backprojection 
type (see again \cite{KucKun08} and the references therein, 
for instance \cite{Nor80,NorLin81,Faw85,Ram85,Nil97,Pal04,FinRak05,FinHalRak07,ElbSchSchu12}).

This paper focuses on the case where $\Gamma$ is a sphere in $\mathbb R^2$ (circle) or $\mathbb R^3$. 
For photoacoustic reconstruction, we make use of the explicit filtered 
back\-projection formulas established in \cite{FinRak05,FinHalRak07}. 
Since we will have to deal with initial sources not necessarily of compact support, 
we remark that a result in \cite{AgrBerKuc96} guarantees injectivity of the 
photoacoustic problem provided certain integrability conditions on the source hold. 
Particularly, the photoacoustic mapping is injective if and only if the source is 
$L^p$-integrable on the entire space, where $p\leq 2n/(n-1)$.

\begin{figure*}
	\centering
	\includegraphics[width=10cm]{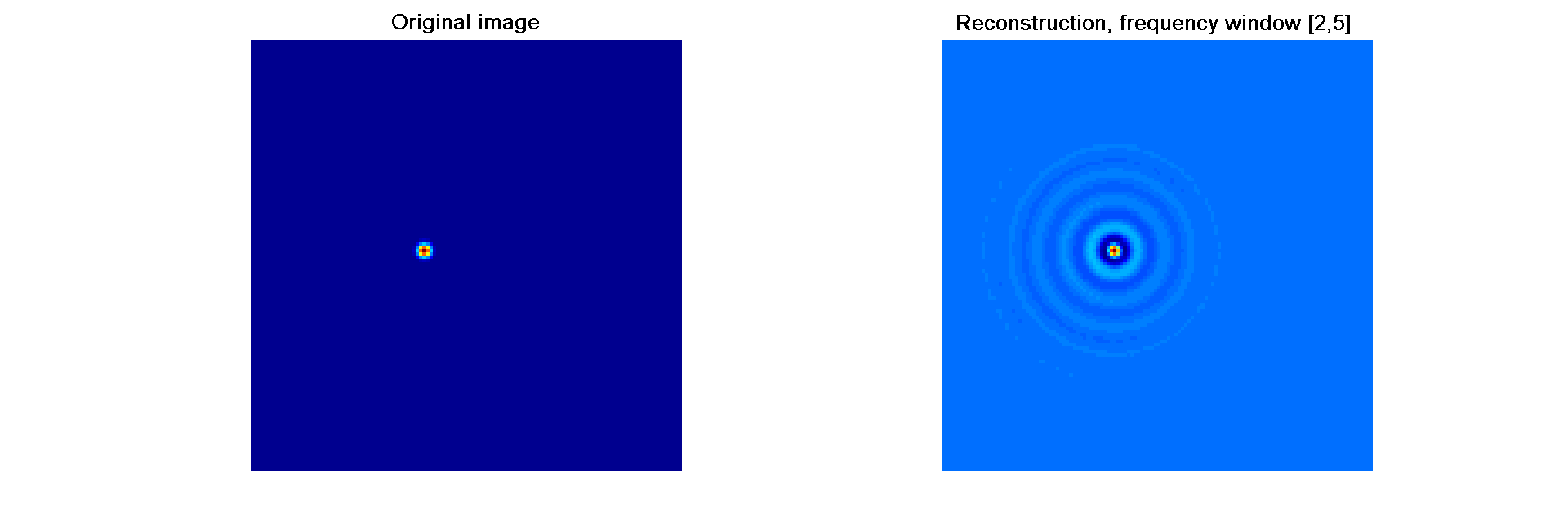}
	\caption{Point source (left) and textured reconstruction (right)}		
	\label{fig:2}     
\end{figure*}

\section{Photoacoustics with band-limited data}
\label{sec:Texture}
We create speckle patterns computationally from photoacoustic data using band-limited measurements 
for backprojection and approximating the initial source $f$. 
To be more precise, instead of measuring the exact trace of the solution of \eqref{eq:wav_delta} at $\Gamma$, 
we instead assume to measure the bandlimited data $m=\phi*_t p$. 

The mathematical background is an application of some results by Haltmeier \cite[Lmm.~3.1]{Hal11} 
(see also~\cite{HalSchZan09a,HalZan10}) to convolution kernels which do not 
necessarily have compact support. Before we state the theorem, we define the Radon und Fourier transform:

\begin{definition}
The Radon transform $\mathcal R\varphi(\theta,s)$ maps $\varphi(\bfx)$ to its integrals 
over hyperplanes in $\mathbb R^n$ with distance $s\in\mathbb R$ to the origin 
and unit normal vector $\theta\in S^{n-1}$. Namely, 
\begin{align}\label{eq:Radon}
\mathcal R\varphi(\theta,s)\,=\,\int_{\theta\cdot\bfy=s}\varphi(\bfy) d\bfy.
\end{align}
In the case $n=1$, the Radon transform corresponds to the absolute value of the function. 
In the case where $\varphi$ is rotationally symmetric, the Radon transform 
$\mathcal R\varphi$ is independent of $\theta$. We can therefore write 
\begin{equation}
\label{eq:Kleinvieh}
	\mathcal R\varphi(\theta,s)=\phi(s)
\end{equation}
for a suitable, even function $\phi:\mathbb R\to\mathbb R$.
\end{definition}

\begin{definition}
The $n$-dimensional Fourier transform $\widehat\varphi(\bfkappa)$ of $\varphi$ is defined as
\begin{align}\label{eq:Fourier}
\widehat\varphi(\bfkappa)\,=\,\int_{\mathbb R^n} \varphi(\bfy)e^{-\I\bfy\cdot\bfkappa}d\bfy\;.
\end{align}
If not stated differently, the Fourier transform of a time-dependent function 
$q(\bfx,t)$ is with respect to the time variable,~i.e.
\begin{align*}\label{eq:Fourier}
\widehat q(\bfx,\kappa)\,=\,\int_{\mathbb R} q(\bfx,t)e^{-\I t\kappa}dt\;.
\end{align*}
\end{definition}

Now we are ready to state the theorem:
\begin{theorem}\label{thm:conv_thm}
Let $p=\mathcal P f$ be a solution of \eqref{eq:pai1a} with initial pressure 
$f\in C^\infty_c(\mathbb R^n)$. 
Furthermore, assume that $\,\Psi \in L^p(\mathbb R^n)$, for some $p$ such that $1\leq p < n/(n-1)$,
is a radially symmetric convolution kernel, i.e., $\Psi(\bfx)\,=\,\psi(|\bfx|)$. Then
\begin{equation}
  \label{eq:Haltmeier}
  (\mathcal{P}(\Psi *_\bfx f))(\bfx, t) 
  \,=\, ( \mathcal{R} \Psi *_t \mathcal{P}_{\mathrm{even}} f) (\bfx, t) \,, 
\end{equation}
for all $\bfx \in\mathbb R^n$, $t>0$. The convolution acts with respect to the 
$\bfx$-variable on the left hand side and with respect to the $t$-variable 
on the right hand side of (\ref{eq:Haltmeier}), respectively.
\end{theorem}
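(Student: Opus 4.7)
The plan is to take the $n$-dimensional spatial Fourier transform of both sides of \eqref{eq:Haltmeier} and reduce the identity to an elementary statement that follows from the Fourier slice theorem for radial functions. On the Fourier side, the wave propagator acts as $\widehat{\mathcal{P}f}(\bfkappa,t)=\cos(t|\bfkappa|)\widehat{f}(\bfkappa)$, and since cosine is even in $t$, the even extension \eqref{eq:PAI_even} satisfies $\widehat{\mathcal{P}_{\mathrm{even}}f}(\bfkappa,\tau)=\cos(\tau|\bfkappa|)\widehat{f}(\bfkappa)$ for all $\tau\in\mathbb R$. By the spatial convolution theorem, the Fourier transform of the left-hand side of \eqref{eq:Haltmeier} is immediately
\begin{equation*}
\widehat{\mathcal{P}(\Psi*_{\bfx}f)}(\bfkappa,t)\,=\,\cos(t|\bfkappa|)\,\widehat{\Psi}(\bfkappa)\,\widehat{f}(\bfkappa).
\end{equation*}

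For the right-hand side, interchanging the spatial Fourier transform with the time convolution and using the formula for $\widehat{\mathcal{P}_{\mathrm{even}}f}$ gives
\begin{equation*}
\widehat{f}(\bfkappa)\int_{\mathbb R}\phi(s)\cos((t-s)|\bfkappa|)\,ds,
\end{equation*}
where $\phi=\mathcal R\Psi$ is the even function from \eqref{eq:Kleinvieh}. Expanding via $\cos((t-s)|\bfkappa|)=\cos(t|\bfkappa|)\cos(s|\bfkappa|)+\sin(t|\bfkappa|)\sin(s|\bfkappa|)$ and exploiting that $\phi$ is even (so the sine contribution vanishes), this reduces to $\widehat{f}(\bfkappa)\cos(t|\bfkappa|)\widehat\phi(|\bfkappa|)$, where $\widehat\phi$ is the one-dimensional Fourier transform. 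Matching the two sides then reduces the theorem to the identity $\widehat\phi(|\bfkappa|)=\widehat\Psi(\bfkappa)$, which is precisely the Fourier slice theorem applied to the radial function $\Psi$. Fourier inversion finishes the argument.

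The main obstacle is technical rather than conceptual. Because $\Psi$ is only assumed to be $L^p$ with $p<n/(n-1)$ rather than compactly supported, all of the following need justification: (i) that $\phi=\mathcal R\Psi$ is a well-defined (locally integrable) function of the radial variable, which is the classical purpose of the assumption $p<n/(n-1)$ in the mapping theory of the Radon transform; (ii) that the time convolution $\phi*_t\mathcal P_{\mathrm{even}}f$ converges absolutely for each $(\bfx,t)$, which can be controlled by combining Hölder's inequality with the dispersion estimate $|\mathcal Pf(\bfx,t)|\lesssim\langle t\rangle^{-(n-1)/2}$ for $f\in C^\infty_c(\mathbb R^n)$; and (iii) that Fubini's theorem legitimates the interchange of the spatial Fourier transform with the time integral.

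A safer alternative, which avoids distributional Fourier arguments, is to bootstrap the result of Haltmeier \cite{Hal11}, stated there for radially symmetric kernels of compact support. One truncates $\Psi$ by $\Psi_R=\Psi\,\chi_{B_R}$, applies the compactly supported case to each $\Psi_R$ to obtain \eqref{eq:Haltmeier} for $\Psi_R$, and then passes to the limit $R\to\infty$. The two convergences $\Psi_R*_{\bfx}f\to\Psi*_{\bfx}f$ and $\mathcal R\Psi_R*_t\mathcal P_{\mathrm{even}}f\to\mathcal R\Psi*_t\mathcal P_{\mathrm{even}}f$ are again controlled by the $L^p$ hypothesis on $\Psi$ (together with $f\in C^\infty_c$ and the wave dispersion estimate), so that dominated convergence applies on both sides. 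Either approach will isolate the fact that the range $1\leq p<n/(n-1)$ is exactly what is needed to make all the integrals involved well-defined.
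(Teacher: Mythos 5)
Your overall strategy is genuinely different from the paper's. The paper does not bootstrap Haltmeier's compact-support result as a black box, nor does it argue on the Fourier side: it re-runs Haltmeier's Radon-transform proof (D'Alembert's formula in one dimension, the intertwining $\mathcal R\Delta=\partial_s^2\mathcal R$, and the convolution theorem $\mathcal R(f*_\bfx g)=\mathcal Rf*_s\mathcal Rg$) and then removes the compact-support restriction on $\Psi$ by a single new ingredient: the \emph{injectivity of the Radon transform on} $L^p(\mathbb R^n)$ for $1\leq p<n/(n-1)$ (Solmon), after checking via Young's inequality that $\mathcal P(\Psi*_\bfx f)=\Psi*_\bfx\mathcal Pf\in L^p$. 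Note the different role of the exponent restriction: in the paper it is used for Radon injectivity in the concluding step, whereas you use it for the well-definedness of $\mathcal R\Psi$ and for integrability. Your Fourier reduction is the correct heuristic (the identity $\widehat\phi(|\bfkappa|)=\widehat\Psi(\bfkappa)$ is indeed the slice theorem, and evenness of $\phi$ kills the sine term exactly as you say), and your truncation argument is a legitimate alternative route; the paper's route has the advantage of never needing pointwise control of the time convolution, since everything is concluded through an identity of Radon transforms.

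However, in both of your variants the integrability claims are not delivered by the tools you name, and this is a genuine gap. The uniform dispersive bound $|\mathcal Pf(\bfx,t)|\lesssim\langle t\rangle^{-(n-1)/2}$ is insufficient in low dimensions: for $n=2$ take $\Psi(\bfy)=(1+|\bfy|^2)^{-\beta/2}$ with $1<\beta\leq 3/2$, which is admissible ($\Psi\in L^p(\mathbb R^2)$ for any $2/\beta<p<2$); then $\phi(s)=\mathcal R\Psi(s)\sim\langle s\rangle^{1-\beta}$ and $\int_{\mathbb R}\langle s\rangle^{1-\beta}\langle t-s\rangle^{-1/2}\,ds=\infty$. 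What actually saves the convolution is the much stronger fixed-$\bfx$ decay of the wave (Huygens' principle in odd $n$; $O(\langle t\rangle^{-2})$-type local decay in $n=2$), and even then a plain H\"older argument fails: writing $\int_{\mathbb R}\mathcal R|\Psi|(s)\,w(t-s)\,ds=\int_{\mathbb R^n}|\Psi(\bfy)|\,w(t-\theta\cdot\bfy)\,d\bfy$, the function $\bfy\mapsto w(t-\theta\cdot\bfy)$ lies in no $L^{p'}(\mathbb R^n)$ for any decaying $w$, because it has no decay along the hyperplanes $\theta\cdot\bfy=\text{const}$. One must exploit the radial symmetry of $\Psi$ (the angular integration gains a factor $\sim|\bfy|^{-(n-1)}$), or invoke Solmon's weighted mapping theory, to get the dominating bound $\int\mathcal R|\Psi|(s)\,|\mathcal P_{\mathrm{even}}f(\bfx,t-s)|\,ds<\infty$; with that bound in hand, your truncation route (ii) does close. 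Your Fourier route has an additional unrepaired problem: the Fubini interchange of the spatial Fourier transform with the time convolution fails absolutely, since $\|\mathcal Pf(\cdot,\tau)\|_{L^1(\mathbb R^n)}$ grows like $\langle\tau\rangle^{(n-1)/2}$, and for the example above $\phi$ lies in no $L^q$ with $q\leq 2$, so $\widehat\phi$ exists only as a tempered distribution, while $\widehat\Psi\in L^{p'}$ is only defined almost everywhere; the pointwise identity you rely on therefore needs a genuinely distributional formulation that the sketch does not provide.
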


\begin{proof}
First we note that $\mathcal P(\Psi*_\bfx f)\in L^p(\mathbb R^n)$: 
Since we have
\[\Psi*_\bfx\Delta f\,=\,\Delta(\Psi*_\bfx f),\quad\text{in }\mathbb R^n\times(0,\infty)\,,\]
we immediately conclude that
\begin{align}\label{eq:Conv_space_wav}
\mathcal P(\Psi*_\bfx f) = \Psi*_\bfx \mathcal P f,\quad\text{in }\mathbb R^n\times(0,\infty)\;.
\end{align}
Young's inequality ensures that
\[L^p(\mathbb R^n)*L^1(\mathbb R^n)\,\subseteq\,L^p(\mathbb R^n)\,,\]
so that
\[{\mathcal P}(\Psi *_\bfx f) = \Psi *_\bfx \mathcal Pf \in L^p(\mathbb R^n)\,,\]
since $\mathcal Pf\in C^\infty_c(\mathbb R^n)\subseteq L^1(\mathbb R^n)$ for every $t>0$.
The rest of the proof is essentially the same as in \cite[Lemma 3.1]{Hal11}:
We start proving the result for one spatial dimension, i.e. $n=1$, 
and write here $x$ instead of $\bfx$ for the spatial variable.
To avoid confusion later on, we write $\bar{\mathcal P}$ instead of $\mathcal P$ 
for the wave operator in one dimension. Using D'Alembert's formula it follows that 
\begin{align*}
\begin{aligned}
&\bar{\mathcal P}\left(\Psi*_x f\right)(x,t)\\
&\,=\,\frac12\left(\int_{\mathbb R}\psi(|y|)f(x-t-y)dy\,+\,\int_{\mathbb R}\psi(|y|)f(x+t-y) dy\right)\\
& \forall\, t > 0\;.
\end{aligned}
\end{align*}
Then, by substituting $y$ by $-y$ in the first integral it follows that for all $t > 0$, 
\begin{align}
\label{eq:psi}
\begin{aligned}
&\bar{\mathcal P}\left(\Psi*_x f\right)(x,t)\\
&\,=\,\frac12\left(\int_{\mathbb R} \psi(|y|)\big(f(x-(t-y))\,+\,f(x+(t-y))\big) dy \right)\;.
\end{aligned}
\end{align}
According to D'Alembert's formula,
$$ \frac12 \big(f(x-(t-y))\,+\,f(x+(t-y))\big) = \bar{\mathcal P} f(x,t-y) \text{ for } t-y > 0\;.$$
Due to our choice of extension to negative times in \eqref{eq:PAI_even}, we also have
$$ \frac12 \big(f(x-(t-y))\,+\,f(x+(t-y))\big) = \bar{\mathcal P}_\text{even}f(x,t-y) \text{ for all } t\;.$$
Therefore, it follows from \eqref{eq:psi} that 
\begin{align}
\label{eq:conv_thm1D}
\begin{aligned}
\bar{\mathcal P}(\Psi*_x f)(x;t)\,&=\,\int_{\mathbb R} \psi(|y|) \bar{\mathcal P}_\text{even} f(x,t-y) dy\;.
\end{aligned}
\end{align}
Using the definition of the Radon transform in {1D} it follows then that 
$$ \bar{\mathcal P}(\Psi*_x f)(x;t) = \int_{\mathbb R} \mathcal{R} \psi(y) \bar{\mathcal P}_\text{even} f(x,t-y) dy\;.$$
For $n>1$ we note that for all $h \in C^2(\mathbb R^n)$, 
\begin{align*}
(\mathcal{R} \Delta h)(\theta,s)\,=\,(\partial_s^2 \mathcal{R}h)(\theta,s)\,, \quad \text{ for all } s \geq 0\,,
\end{align*} 
see e.g.~\cite[p.3]{Hel11}.
By applying the Radon transform to the wave equation, we can therefore conclude
\begin{align}\label{eq:Wave_Radon}
\left(\mathcal{R}\mathcal{P} f\right)(\theta,s;t)\,=\,\left(\bar{\mathcal P} \mathcal{R}f\right)(\theta,s;t)\;.
\end{align}
Now we use the convolution theorem of the Radon transform (see e.g. \cite{Nat01}), which states that 
\begin{align}\label{eq:Conv_Thm_Radon}
\mathcal{R}(f*_\bfx g)(\theta,s)=(\mathcal{R}f*_s \mathcal{R}g)(\theta,s),
\end{align} 
where the convolution on the left-hand is $n$-dimensional, whereas the convolution on the right-hand side is taken in one dimension.
By applying (\ref{eq:Conv_space_wav}), (\ref{eq:Conv_Thm_Radon}) and (\ref{eq:Wave_Radon}), (\ref{eq:Conv_space_wav}) and (\ref{eq:conv_thm1D}), and again (\ref{eq:Wave_Radon}), it follows that:
\begin{align*}
(\mathcal{R} \mathcal{P} )(\Psi*_\bfx f)\,&=\,\mathcal R\Psi*_s \bar{\mathcal P} \mathcal{R} f\,=\,(\mathcal{R}\Psi)*_t(\bar{\mathcal P}_\text{even} \mathcal{R}f)\\
&=\,(\mathcal{R}\Psi)*_t(\mathcal{R}\mathcal{P_\text{even}} f).
\end{align*}
The term $\mathcal R\Psi=\phi(|\cdot|)$ on the right-hand-side is independent of $\theta$ due to the rotational symmetry of $\Psi$. We therefore can write (see also Remark \ref{rem:extension}):
\begin{align}
\label{eq:Radon_equ}
(\mathcal{R} \mathcal{P} )(\Psi*_\bfx f)(\theta,s;t)\,&=\,\mathcal{R}\big(\phi(|\cdot|)*_t\mathcal{P} f\big)(\theta,s;t).
\end{align}

Due to our choice of $p$, the Radon transform is injective on $L^p(\mathbb R^n)$ (see \cite{Sol87}). From \eqref{eq:Radon_equ}, we therefore derive \eqref{eq:Haltmeier}.
\qed
\end{proof}
Theorem \ref{thm:conv_thm} is the main ingredient to relate the convolved measurement data with a convolution of $f$. Note, however, that on the right-hand side of statement \eqref{eq:Haltmeier}, the quantity $\mathcal P_\text{even}f$ appears, whereas our measurements give only knowledge of $\phi*\mathcal P f$ as in \eqref{eq:pai1b}. For application of Theorem~\ref{thm:conv_thm} to our case of bandlimited data, we therefore need a relation between $\mathcal P_\text{even}f$ and $\phi*\mathcal P f$, which is provided by the following corollary.

\begin{corollary}\label{cor:conv_thm}
Let $\Psi:\mathbb R^n \to \mathbb R$ be radially symmetric and $\Psi\in L^p(\mathbb R^n)$, for some $1\leq p < n/(n-1)$, and let $\mathcal R\Psi$ be represented as 
\[\mathcal{R}\Psi(\theta,s)\,=\,\Phi(\theta,|s|)\,=\,\phi(|s|)\,,\] 
for an even function $\phi:\mathbb R\to\mathbb R$ as in \eqref{eq:Kleinvieh}.
Moreover, let our measurements be given by
\[m(\bfx;t)=(\phi*_t \mathcal{P} f)(\bfx;t)\quad \text{on }\Gamma\times(0,\infty)\;.\] 
Then, the function 
\begin{equation}
\label{eq:m_even}
m_\text{even}(\bfx,t):=(\phi*_t P_\text{even}f)(\bfx,t) \quad \text{on }\Gamma\times(0,\infty)\,,
\end{equation} where $\bfx\in\Gamma$, $t\in(0,\infty)$, can be computed analytically from the causal measurement data $m(\bfx,t)$. Furthermore,
\begin{equation}
  \label{eq:Haltmeier2}
  m_\text{even}(\bfx;t)\,=\,(\mathcal{P}(\Psi *_\bfx f))(\bfx, t) \quad \text{on }\Gamma\times(0,\infty)\;. 
\end{equation}
\end{corollary}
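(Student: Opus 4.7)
The plan is to split the corollary into its two assertions and address them in reverse order: first the closed-form identity \eqref{eq:Haltmeier2}, then the computability claim. Both rest almost entirely on Theorem~\ref{thm:conv_thm}, so I would keep the calculations to a minimum and focus on matching hypotheses and notation.

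For equation \eqref{eq:Haltmeier2}, I would invoke Theorem~\ref{thm:conv_thm} essentially as a black box. The hypotheses on $\Psi$ match those of the theorem, and by rotational symmetry the Radon transform $\mathcal{R}\Psi(\theta, s)$ coincides with the even function $\phi(s)$ from the definition of $m_\text{even}$. Inserting this into \eqref{eq:Haltmeier} gives
\begin{equation*}
\mathcal{P}(\Psi *_\bfx f)(\bfx, t) \,=\, (\mathcal{R}\Psi *_t \mathcal{P}_\text{even} f)(\bfx, t) \,=\, (\phi *_t \mathcal{P}_\text{even} f)(\bfx, t) \,=\, m_\text{even}(\bfx, t)
\end{equation*}
on $\Gamma \times (0, \infty)$, which is \eqref{eq:Haltmeier2}.

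For the computability claim, I would split the convolution integral defining $m_\text{even}(\bfx, t)$ at $\tau = 0$. Causality of $\mathcal{P}f$ reduces the positive half to $m(\bfx, t)$, while in the negative half the substitution $\sigma = -\tau$ combined with the evenness of $\phi$ identifies the integral as the value at $-t$ of the convolution $\phi *_t \mathcal{P}f$. This yields the pointwise identity
\begin{equation*}
m_\text{even}(\bfx, t) \,=\, m(\bfx, t) \,+\, m(\bfx, -t), \qquad t > 0\,,
\end{equation*}
where $m(\bfx, \cdot)$ on the negative half-line is understood as the extension of the measurement via the convolution formula.

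The only delicate step is justifying that this extension is genuinely determined by the causal measurement $m$ on $\Gamma \times (0, \infty)$. Because $\mathcal{P}f$ is causal and $\phi$ is a known even kernel, the time-Fourier identity $\widehat{\mathcal{P}f} = \widehat{m}/\widehat{\phi}$ lets one deconvolve $m$ to recover $\mathcal{P}f$ and then re-evaluate the convolution at negative times; for the bandlimited kernels used later in the paper one can bypass deconvolution entirely, since $m$ is then bandlimited and hence entire analytic in $t$, so its values on $(0, \infty)$ uniquely determine its values on $(-\infty, 0)$. I expect this continuation step to be the main conceptual obstacle, with the remainder of the proof being a direct rewriting of Theorem~\ref{thm:conv_thm}.
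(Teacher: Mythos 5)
Your proof is correct, and for \eqref{eq:Haltmeier2} it coincides with the paper's: both simply substitute $\mathcal R\Psi=\phi$ into \eqref{eq:Haltmeier} and restrict to $\Gamma$. For the computability claim your route differs in form but not in substance. You split the convolution in the time domain and obtain $m_\text{even}(\bfx,t)=m(\bfx,t)+m(\bfx,-t)$, using only causality of $\mathcal Pf$ and evenness of $\phi$; the paper instead works in the frequency domain, using that $p$ is real-valued (so $\widehat{\mathcal P_\text{even}f}=2\,\Real\,\widehat{\mathcal Pf}$) and that $\widehat\phi$ is real (since $\phi$ is real and even) to conclude $\widehat{m_\text{even}}(\bfx,\kappa)=2\,\Real\bigl(\widehat m(\bfx,\kappa)\bigr)$. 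For real-valued $m$ these two identities are Fourier transforms of each other, so the content is identical, and your derivation is in fact slightly leaner in hypotheses. What your version makes explicit --- and the paper silently elides --- is that either formula requires $m$ (equivalently $\widehat m$) beyond the measured window $t>0$. Your proposed resolutions need care, though: the deconvolution $\widehat{\mathcal Pf}=\widehat m/\widehat\phi$ is ill-defined for the hard bandpass \eqref{eq:bandpass_speckle}, whose $\widehat\phi$ vanishes off the band, and it is also unnecessary, since only $\widehat m$ itself enters the formula; the Paley--Wiener/unique-continuation argument for bandlimited kernels is correct in principle but yields no stable analytic recipe. The reading consistent with the paper (cf.\ the remark after \eqref{eq:IRF_speckle} that the filter may be applied as a post-processing step) is that the causal data $\mathcal Pf|_{\Gamma\times(0,\infty)}$ is what is recorded and $m$ is synthesized by convolution, so its negative-time values are available by construction; with that understanding, your argument is a complete and faithful, if differently phrased, proof of the corollary.
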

\begin{proof}
Let $p$ denote  the solution of \eqref{eq:wav_delta}. Since $p(\bfx;t)$ is real-valued, it follows that
\[\widehat p(\bfx,-\kappa)\,=\,\overline{\widehat{p}(\bfx,\kappa)}\,\qquad \forall x \in \mathbb R^n, \kappa \in \mathbb R\,,\] 
and therefore 
\[\widehat{\mathcal{P}_\text{even}f}(\bfx,\kappa)\,=\, 2\,\Real(\hat{p}(\bfx,\kappa))\,\qquad \forall \bfx \in \mathbb R^n, \kappa \in \mathbb R\,.\]
Thus, from \eqref{eq:m_even} it follows that
\begin{equation*}
 \begin{aligned}
\widehat {m_{\text{even}}}(\bfx,\kappa)\,
&=\,\widehat \phi(\kappa)\widehat {\mathcal{P}_\text{even}f}(\bfx,\kappa)\\
&=\,\widehat \phi(\kappa)\,2\,\Real\left(\widehat {\mathcal{P} f}(\bfx,\kappa)\right)\\
&=\,2\,\Real\left(\widehat \phi(\kappa)\,\widehat {\mathcal{P} f}(\bfx,\kappa)\right)\\
&=\,2\,\Real\left(\widehat m(\bfx,\kappa)\right)
 \qquad \forall \bfx\in\Gamma,\,\forall \kappa\in\mathbb R\,,
\end{aligned}
\end{equation*}
where in the third equality we use that $\widehat{\phi}$ is real-valued, since $\phi$ is a real-valued and even function.
The function $m_\text{even}=\phi*_t \mathcal{P}_\text{even} f$ is of the appropriate form 
to apply (\ref{eq:Haltmeier}), which allows us to derive (\ref{eq:Haltmeier2}).
\qed\end{proof}
Corollary \ref{cor:conv_thm} gives a simple relation between causal and even data convolved in time. 
Using Theorem \ref{thm:conv_thm}, the even data can be related to an initial source 
convolved in space by a point-spread function (PSF) that is given in terms of an inverse Radon transform 
of the radially symmetric extension of the impulse-response function (IRF), previously denoted by the letter $\phi$.

\section{PAI elastography using texture information}
\label{sec:MotionEst}
The results of Section \ref{sec:Texture} give the theoretical description for the influence of using band-limited data in the photoacoustic reconstruction.
In the following subsection, we describe how to find pairs of filter functions $\phi$ and $\Psi$ in practice. 
Moreover, we give an example of a pair of oscillating functions, that we use in what follows to create speckle-like patterns on photoacoustic images. 
The rest of the paper will treat the case of two spatial dimensions. 
Since the theoretical considerations from Section \ref{sec:Texture} are valid in any spatial dimension,
the application to 3D images works in complete analogy to the two-dimensional case described below.
\subsection{Speckle generation in 2D Photoacoustics}
We assume to measure the bandpass data 
\[m=\phi*_t \mathcal P f\,,\]
where we choose $\phi$ as follows: The time-domain equivalent of the bandpass
\begin{align}\label{eq:bandpass_speckle}
\widehat \phi(\bfx,\kappa)=\chi_{[\kappa_\text{min},\kappa_\text{max}]}(|\kappa|)
\end{align}
is given by the IRF
\begin{align}\label{eq:IRF_speckle}
\begin{aligned}
\phi(t)=\,\frac{\cos(\kappa_0 t)}{2a}\mathrm{sinc}\left(\frac{t}{4\pi a}\right),
\end{aligned}
\end{align}
where $2a=\kappa_\text{max}-\kappa_\text{min}$ is the bandwidth and $\kappa_0=\kappa_\text{min}+a$ is the center frequency of our window.
Note that with the formulation above, we cover the cases where our detector measures the described signal
as well as the case where we manipulate the data by a post-processing step.

Now assume that we have computed $m_\text{even}$ as described in Corollary \ref{cor:conv_thm}. 
The results of Section \ref{sec:Texture} in principle describe the relationship of the filter in time
and a resulting filter in space. But since we actually want to compute this space filter explicitly,
it is convenient to make use of the so-called \emph{Fourier-slice} theorem for the Radon transform \cite{Hel11,Sol87},
that relates the Fourier transform of the Radon transform (in radial direction) 
to the Fourier transform of the image (in all spatial dimensions). 
In 2D, the Radon transform of a radially symmetric function 
is nothing else than the \emph{Abel transform} \cite{Nat01}. 
The Fourier-transform of a two dimensional radially symmetric function is the so-called \emph{Hankel transform}.
The 2D version of the Fourier slice theorem for radially symmetric functions 
is therefore often written as 
\[
	\mathcal F\mathcal A = \mathcal H,
\]
also called the \emph{FHA-cycle}. 
In order to find the corresponding $\Psi$ to the function $\phi$ in \eqref{eq:bandpass_speckle}, 
we make use of the tables in literature describing important pairs of the above mentioned Fourier, 
Abel and Hankel transforms (see,~e.g.,~\cite{Bra78,Obe90}). In fact, for our given hard bandpass \eqref{eq:bandpass_speckle}, 
it suffices to compute its (inverse) Hankel transform, so that the corresponding point-spread function $\Psi$ is given by
\begin{align}\label{eq:PSF_speckle}
\begin{aligned}
\Psi(\bfx)= \,\frac{2\pi}{|\bfx|}\left[(\kappa_\text{max}) J_1\left((\kappa_\text{max})|\bfx|\right)-(\kappa_\text{min})J_1\left((\kappa_\text{min})|\bfx|\right)\right]\;,
\end{aligned}
\end{align}
where $J_1$ is the first-kind Bessel function of order $1$.
By using an asymptotic estimate of $J_1$ for large arguments, 
it is easy to check that $\Psi\in L^p(\mathbb R^2)$ iff $p>4/3$, 
which means that $\Psi$ fulfils the integrability requirements demanded in Corollary~\ref{cor:conv_thm}. 
This ensures that the result actually applies to the used filter.

Our suggested approach for texture generation then is this: We choose $\kappa_\text{max}$ and $\kappa_\text{min}$ to determine the IRF $\phi$. Then we compute $m_\text{even}$ and solve the photoacoustic inverse problem with data $m_\text{even}$. Theorem~\ref{thm:conv_thm} then ensures that this yields the perturbed reconstruction 
\begin{equation}
\label{eq:TextureConv}
	f *_\bfx \Psi
\end{equation}
With the right choice of $\kappa_\text{min}$ and $\kappa_\text{max}$, this is a natural candidate for a textured variant of the photoacoustic contrast in the initial pressure $f$.

In Figure \ref{fig:2}, a point source and its photoacoustic reconstruction from band-limited data (i.e., data convolved with the IRF in (\ref{eq:IRF_speckle})) are shown. The oscillations introduced by the present band-limited photoacoustic reconstruction method introduce additional texture on the image. The use of this texture in estimating the optical flow between two photoacoustic images is investigated in the following sections.


\subsection{Principle of PAI elastography}
In the previous subsection, we introduced a texture method for photoacoustic images. We now will study how motion estimation can be performed and amended by adding texture to photoacoustic images. 

We emphasize that the initial pressure $f$ introduced in \eqref{eq:wav_delta} in the photoacoustic forward problem is spatially varying and can either represent the image before (i.e., $f_1$) or after ($f_2$) mechanical deformation as described in Section \ref{sec:Quasi_static_elast}.

The main concept in the proposed method of \emph{photoacoustic elastography} is to perform the following steps in the first step in section \ref{subsec:Elastography}:
 		\begin{abclist}
 			\item record a PAI image $f_1$ using the texture-generating method
 			\item perturb the tissue using a mechanical source
 			\item record the perturbed configuration $f_2$ using the texture-generation method 
  		\end{abclist}	
We will now estimate the displacement $\b u$ as in the second step in section \ref{subsec:Elastography}.

In the following, we evaluate the motion estimation using the Horn-Schunck model \eqref{eq:HS} with or without speckle generation.

\FloatBarrier

\section{Experiments}
\label{sec:Experiments}
There are many different varieties of experiments one can perform. In this section, we present a first selection, using structures which contain homogeneous regions, similar to vascular structures.
\subsection{Simulations}
We simulate photoacoustic forward data using the k-wave toolbox \cite{TreCox10}. For reconstruction, we use a filtered back-projection algorithm. Displacement vector fields have been simulated using the FEM and mesh-generating packages GetDP and Gmsh \cite{DulGeuHenLeg13,GeuRem09}.

\begin{figure*}[h!]
\subfloat[Visualized mask]{
\includegraphics[width=4.5cm]{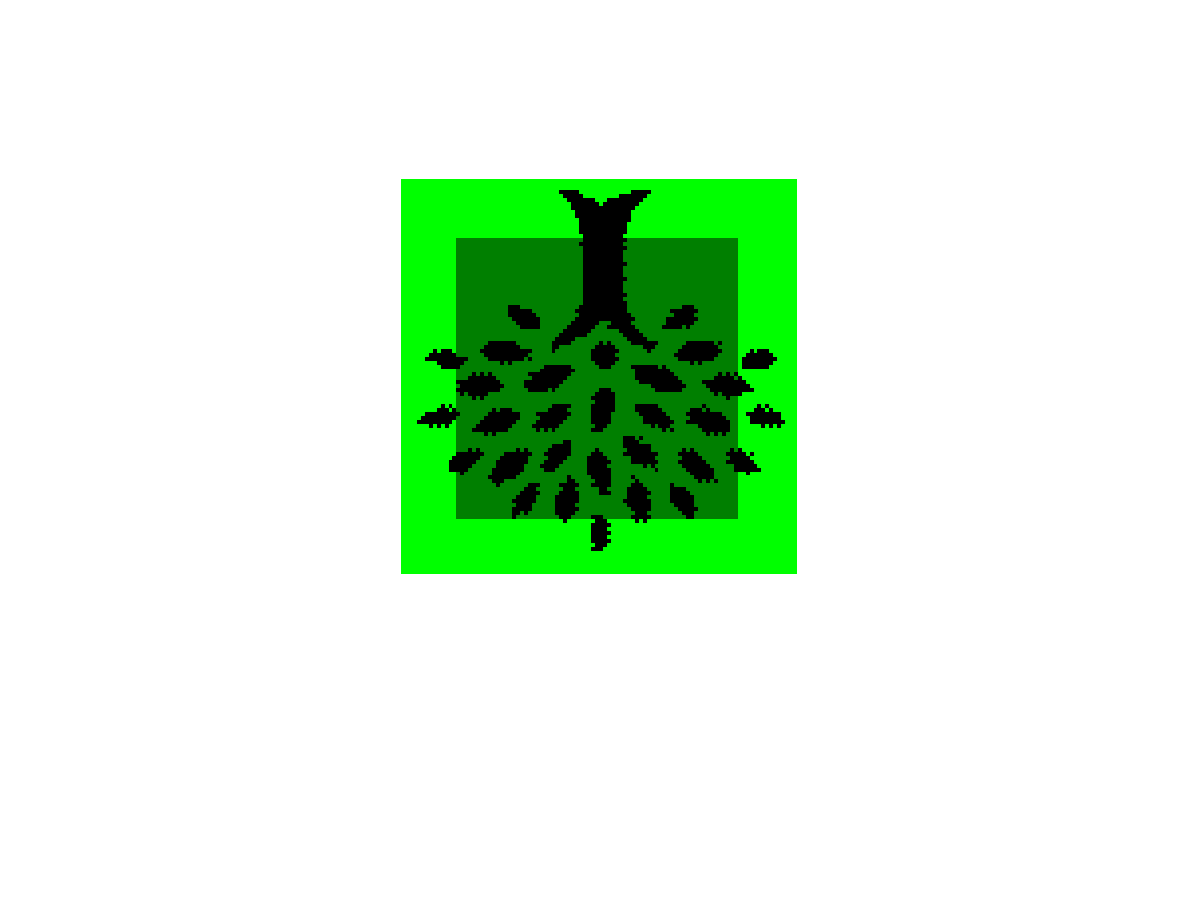}
\label{fig:Exp1M}}
\subfloat[Ground truth]{
\includegraphics[width=4.5cm]{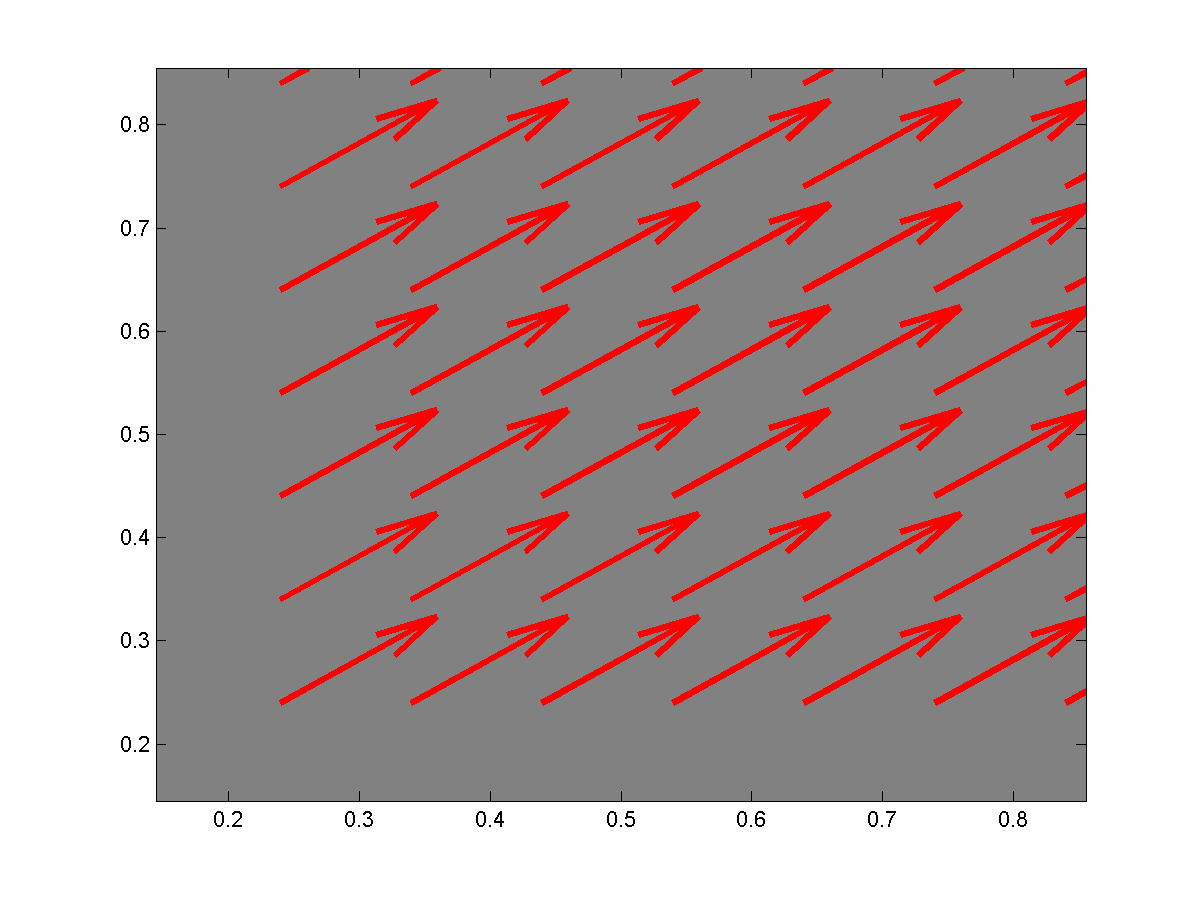}}
\\
\subfloat[none]{
\includegraphics[width=4.5cm]{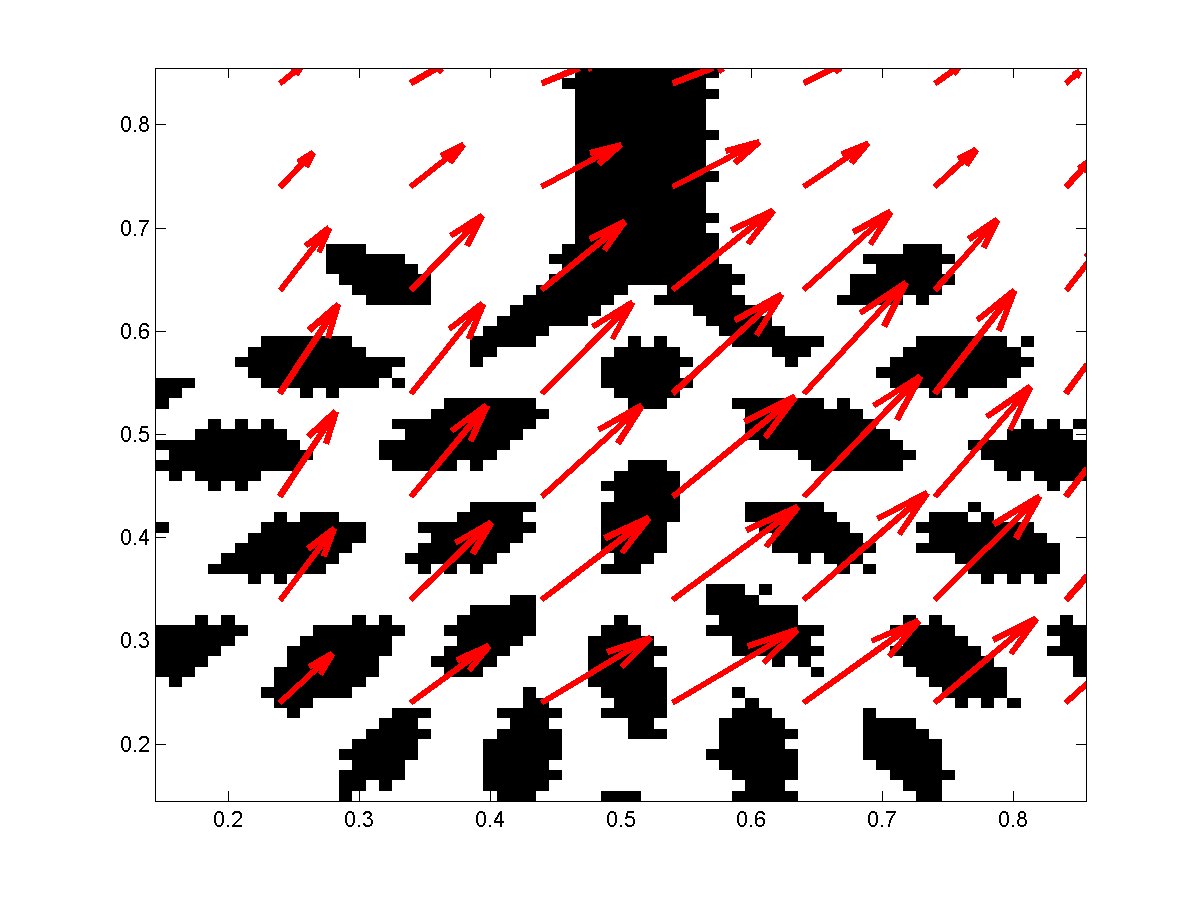}
\label{fig:Exp1n}}
\subfloat[Gauss $\alpha=0.3$ ]{
\includegraphics[width=4.5cm]{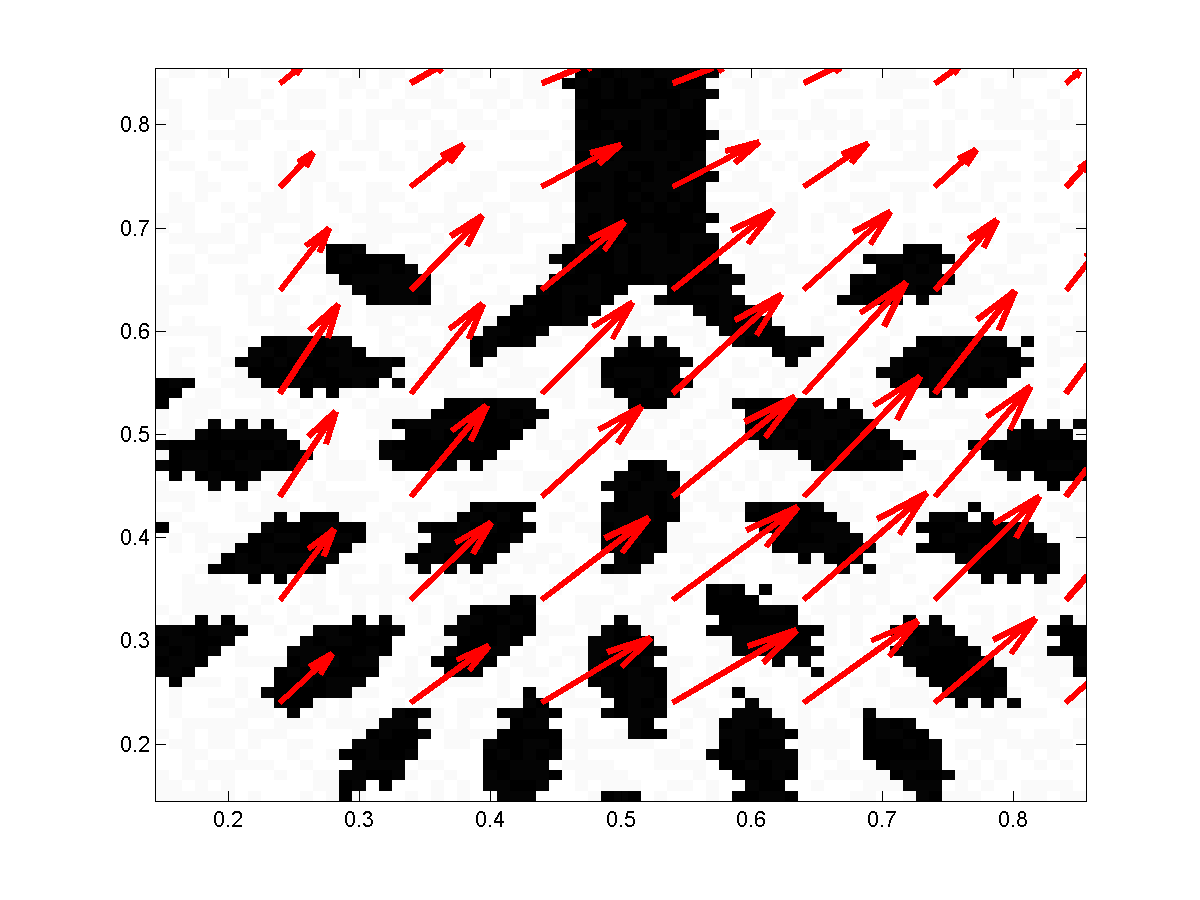}}
\\
\subfloat[Band  $\kappa_\text{min}=0.4$]{
\includegraphics[width=4.5cm]{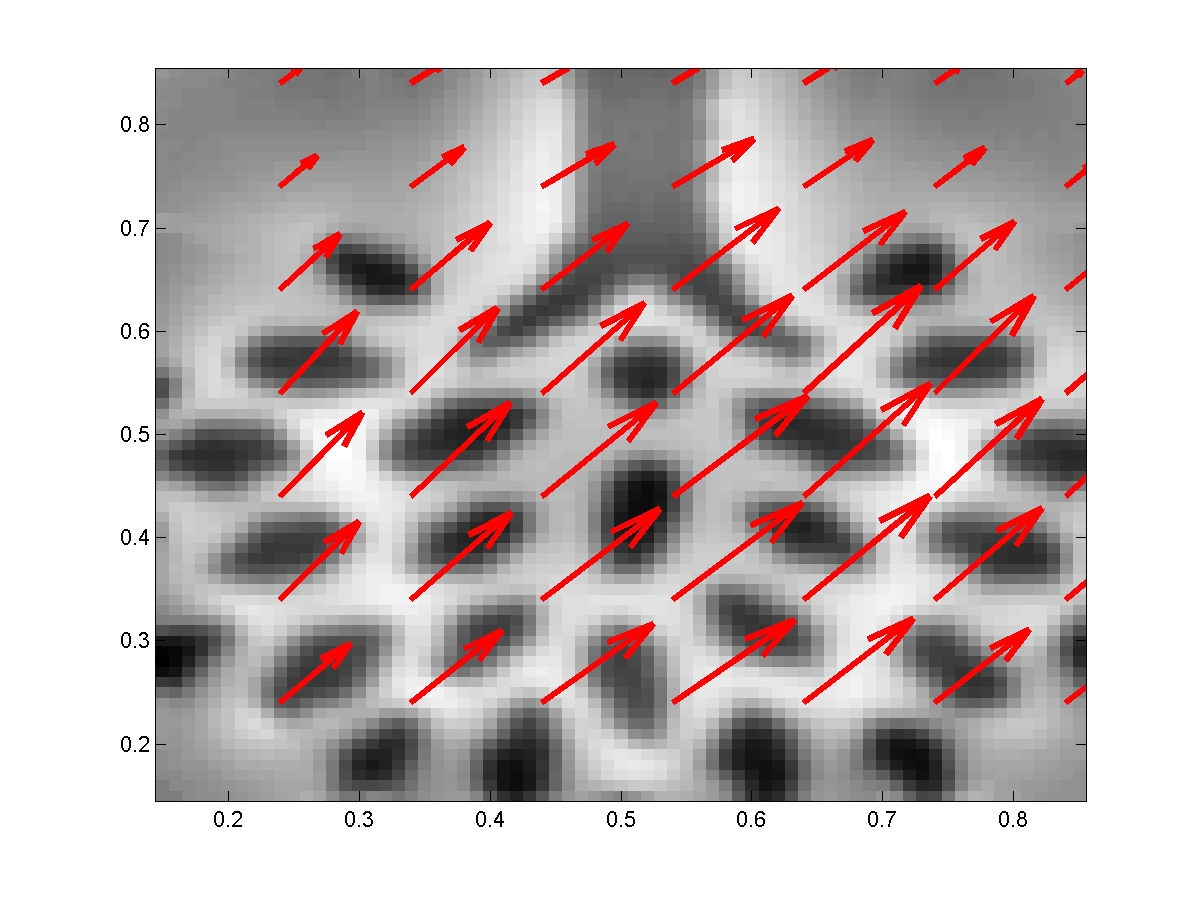}}
\subfloat[Band $\kappa_\text{min}=1.8$]{
\includegraphics[width=4.5cm]{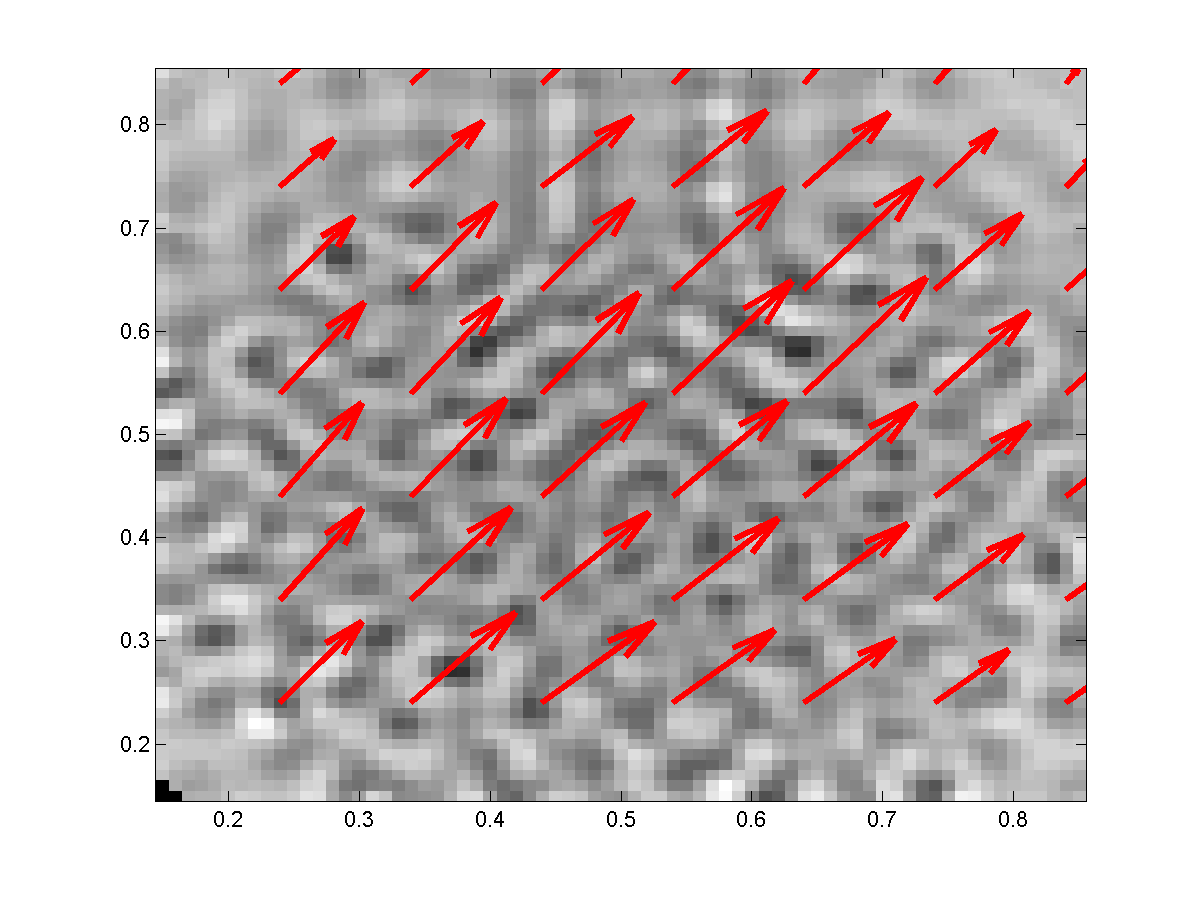}}
\\
\subfloat[Angular error]{
\includegraphics[width=4.5cm]{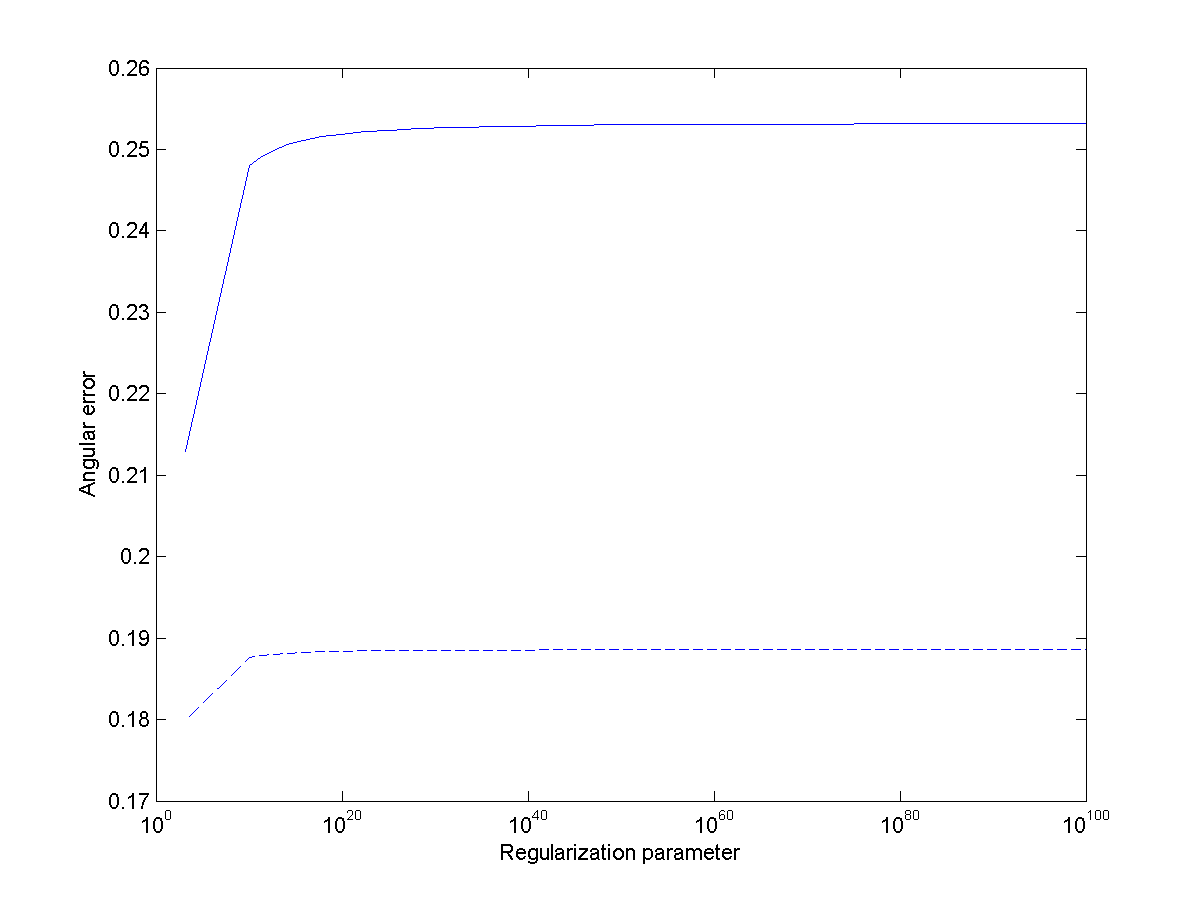}}
\subfloat[Distance error]{
\includegraphics[width=4.5cm]{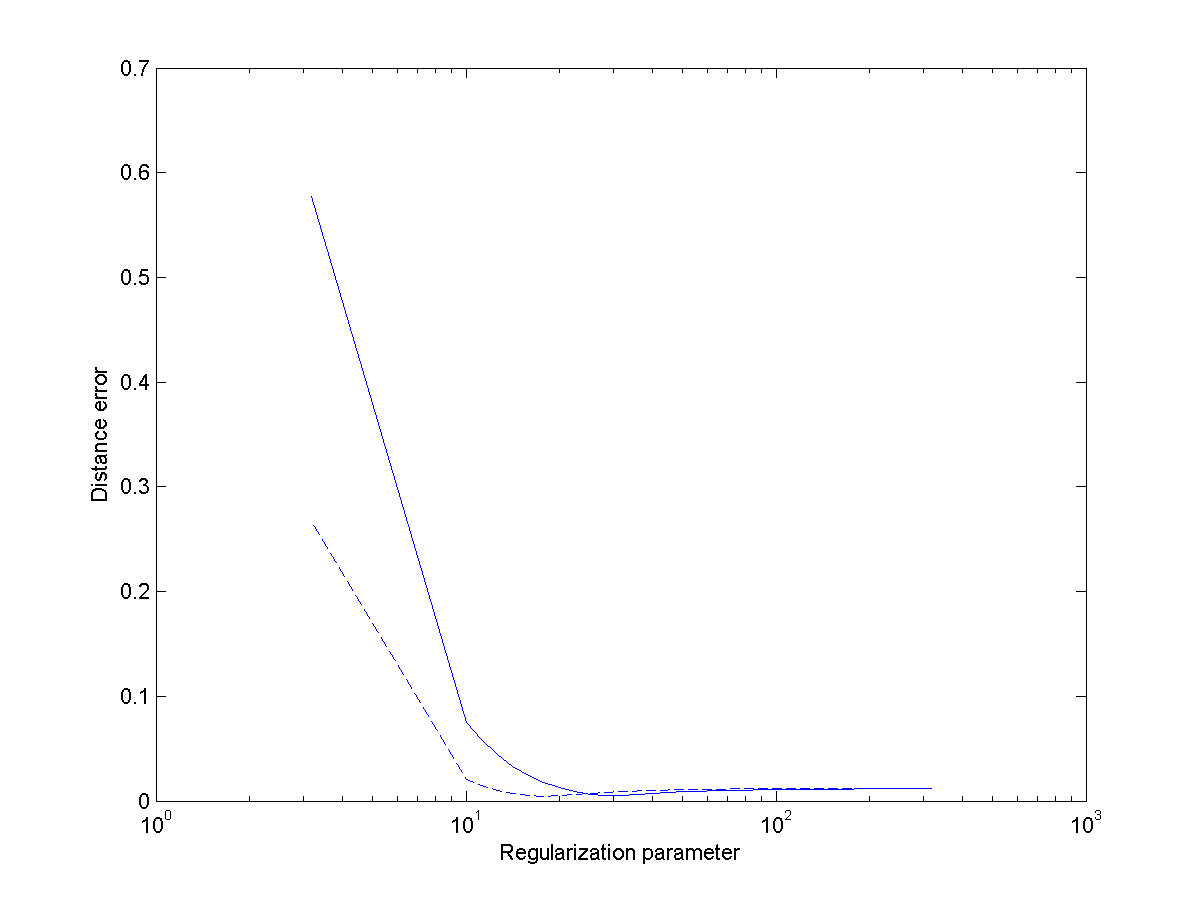}
\label{fig:Exp1DiagrB}}

\caption{(Experiment 1) (c)-(f): Computed vector fields for $\lambda=12.5893$. (g), (h): different error measures for regularization parameters $10^{0.5}\leq\lambda\leq 10^{2.5}$, 
 full line: original data; dashed line: band-limitation texture $\kappa=0.4$}
\label{fig:Exp1}
\end{figure*}

\begin{figure*}[h!]
\subfloat[Visualized mask]{
\includegraphics[width=4.5cm]{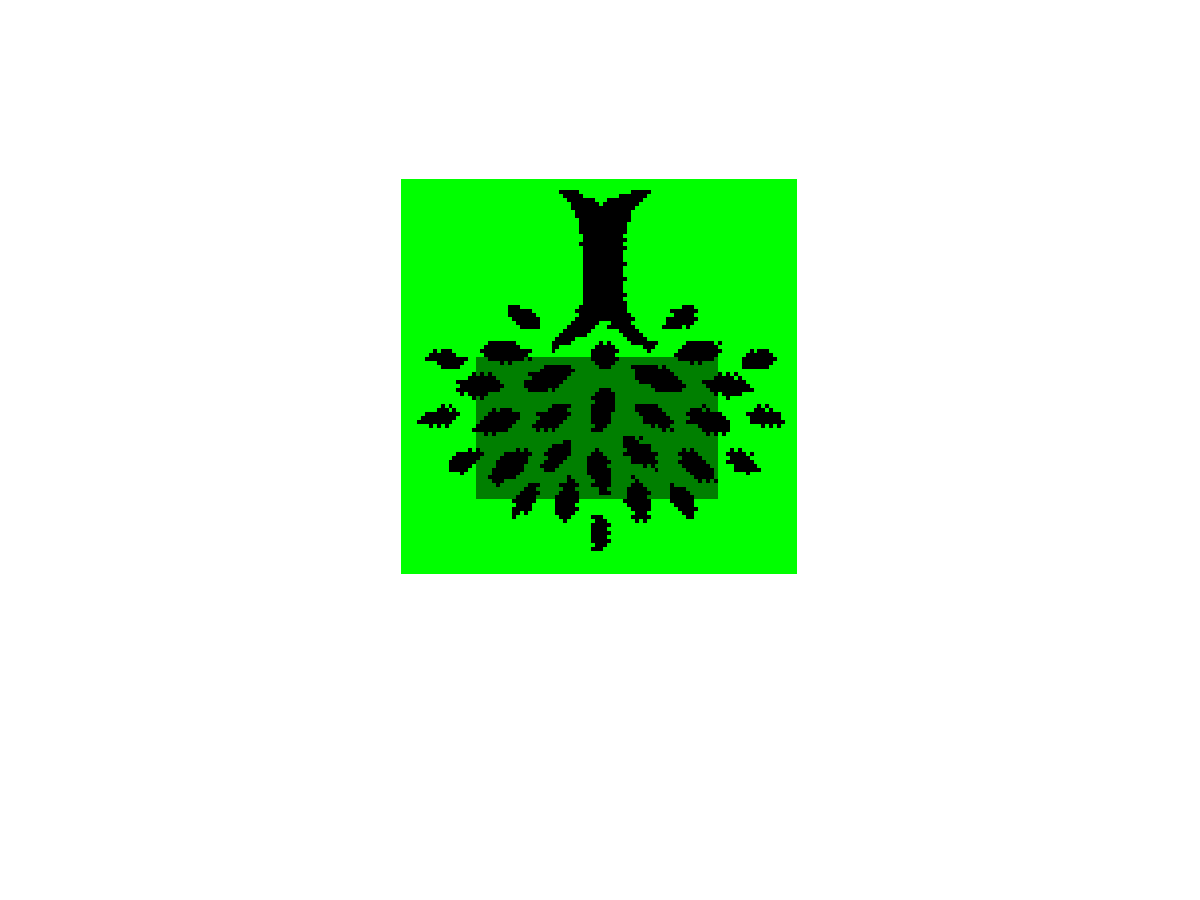}
\label{fig:Exp2M}}
\subfloat[Ground truth]{
\includegraphics[width=4.5cm]{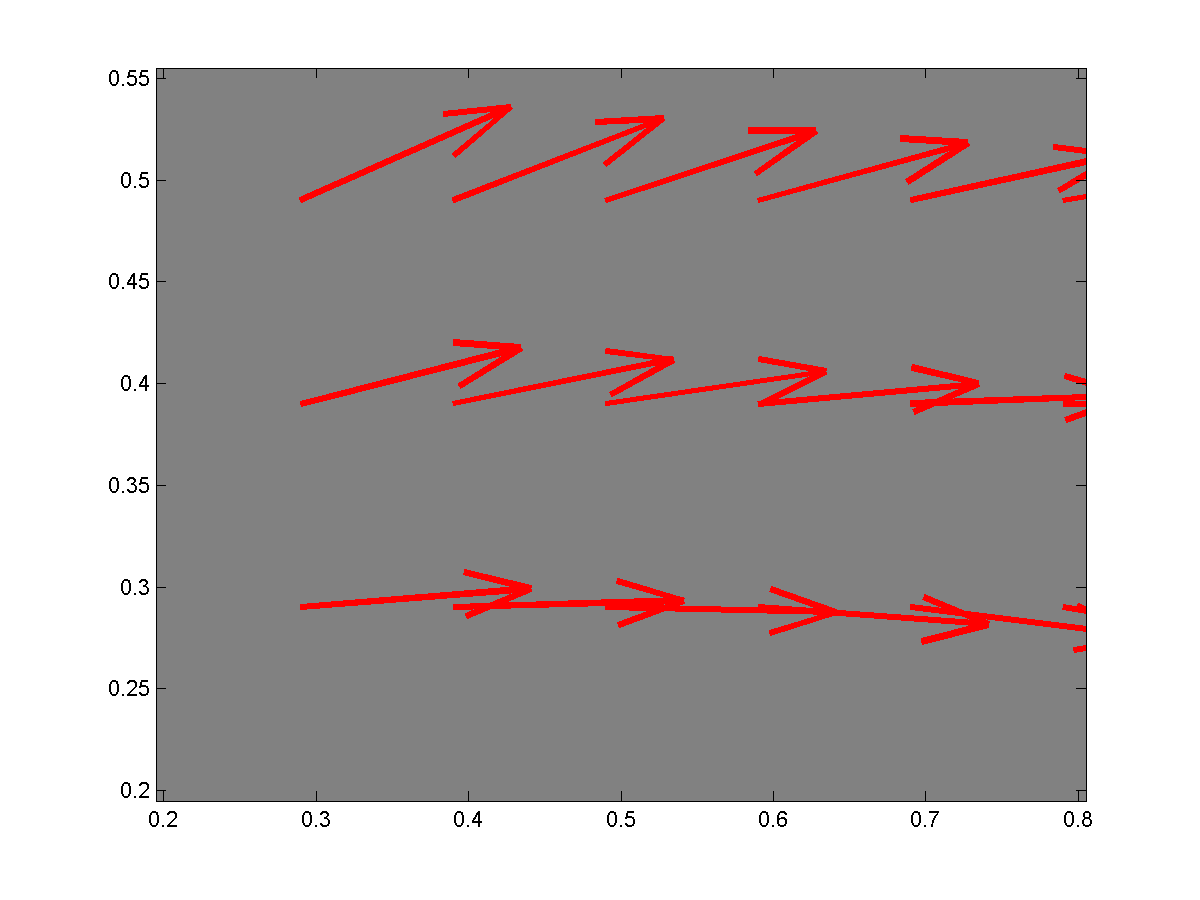}}
\\
\subfloat[none]{
\includegraphics[width=4.5cm]{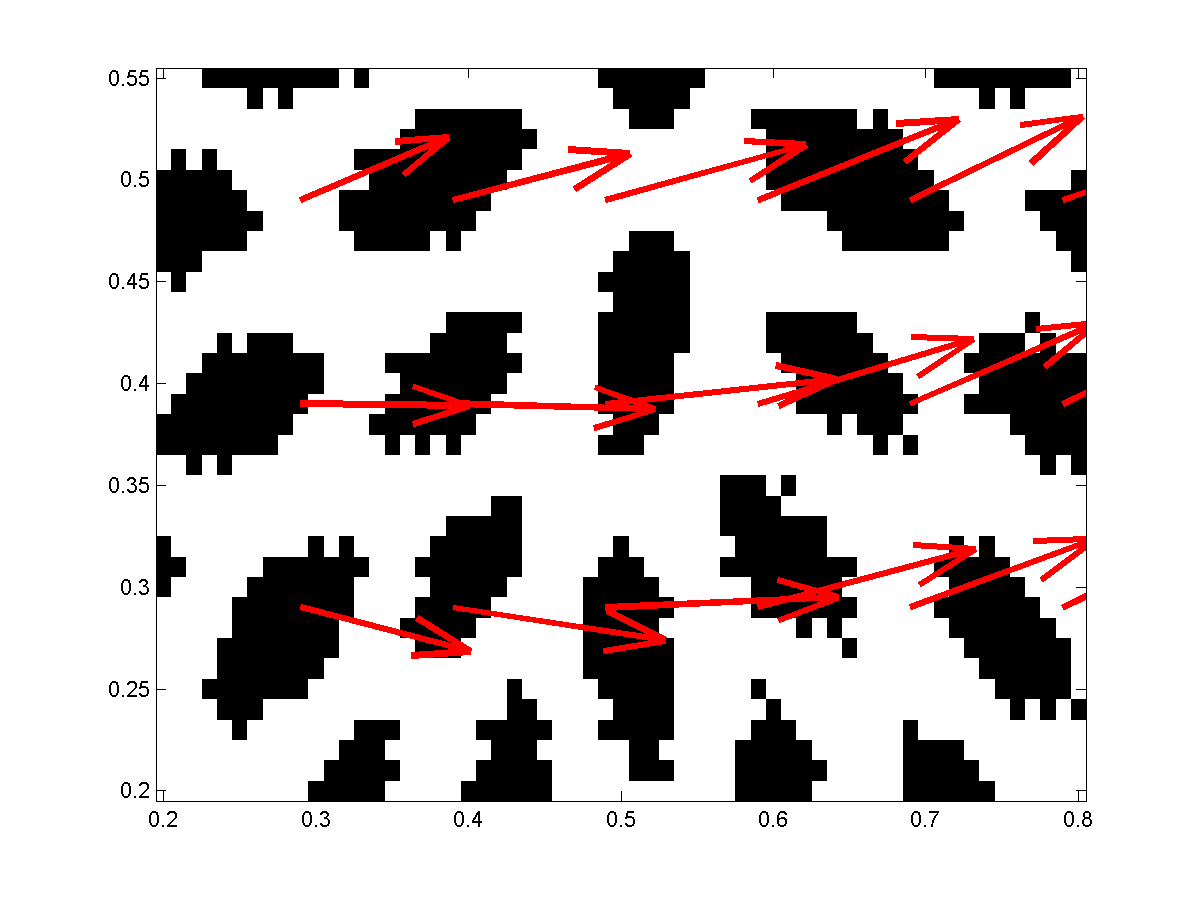}
\label{fig:Exp2n}}
\subfloat[Gauss 0.3 ]{
\includegraphics[width=4.5cm]{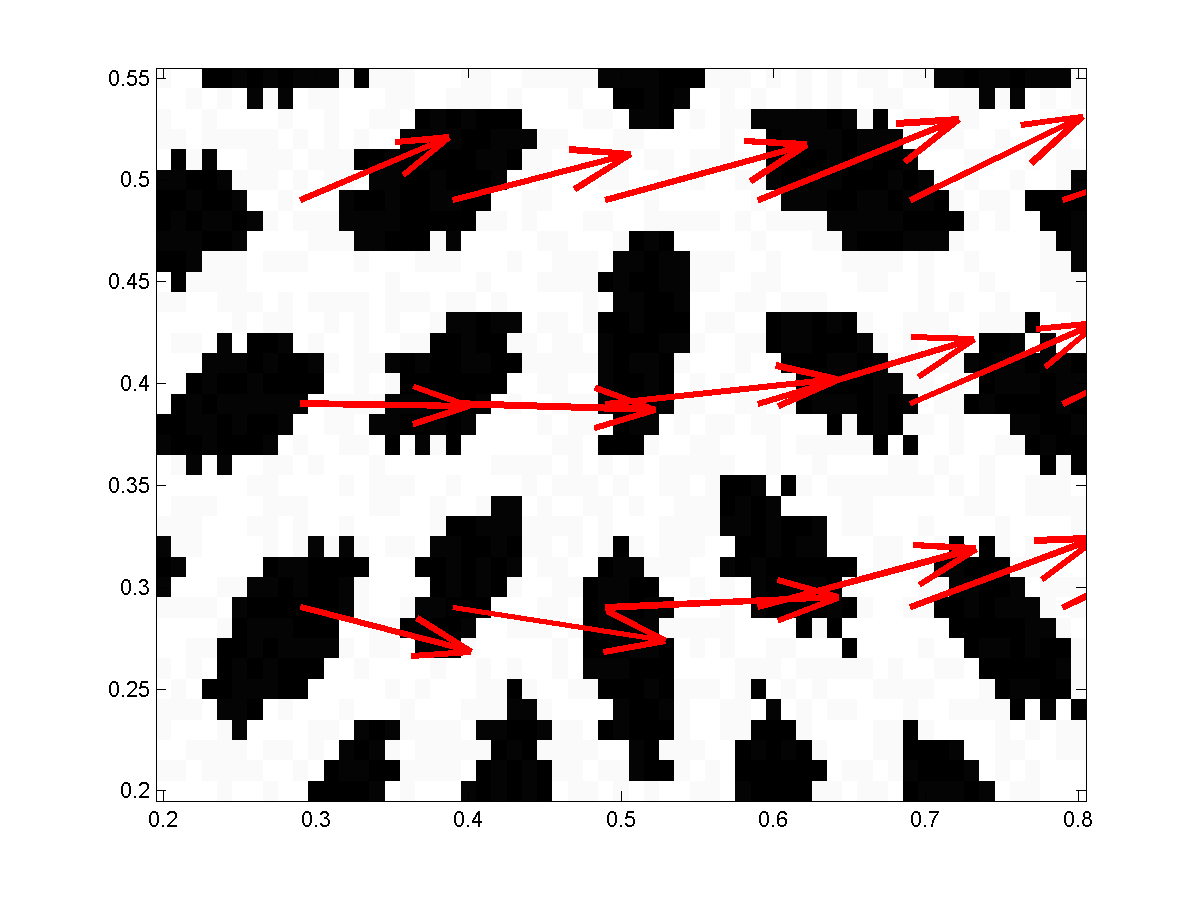}}
\\
\subfloat[Band 0.4]{
\includegraphics[width=4.5cm]{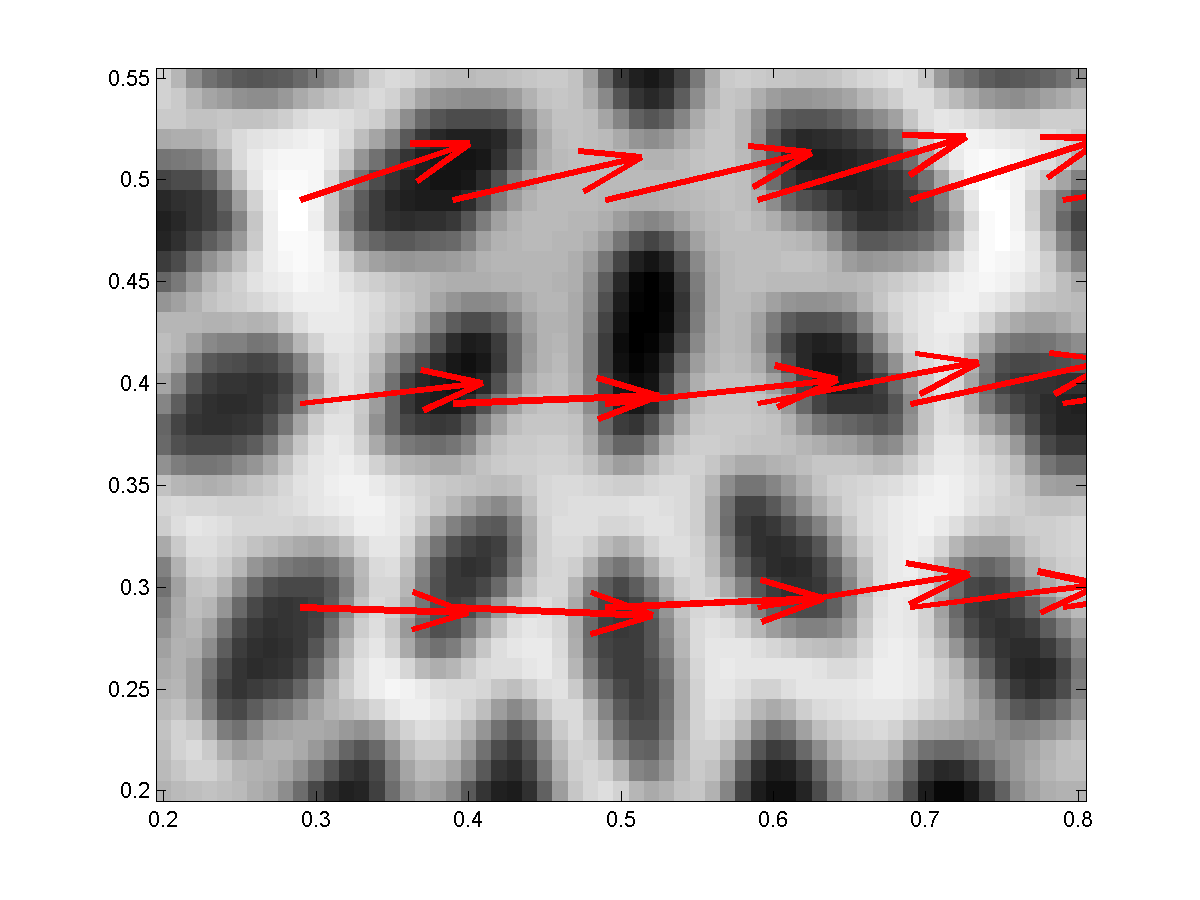}}
\subfloat[Band 1.8]{
\includegraphics[width=4.5cm]{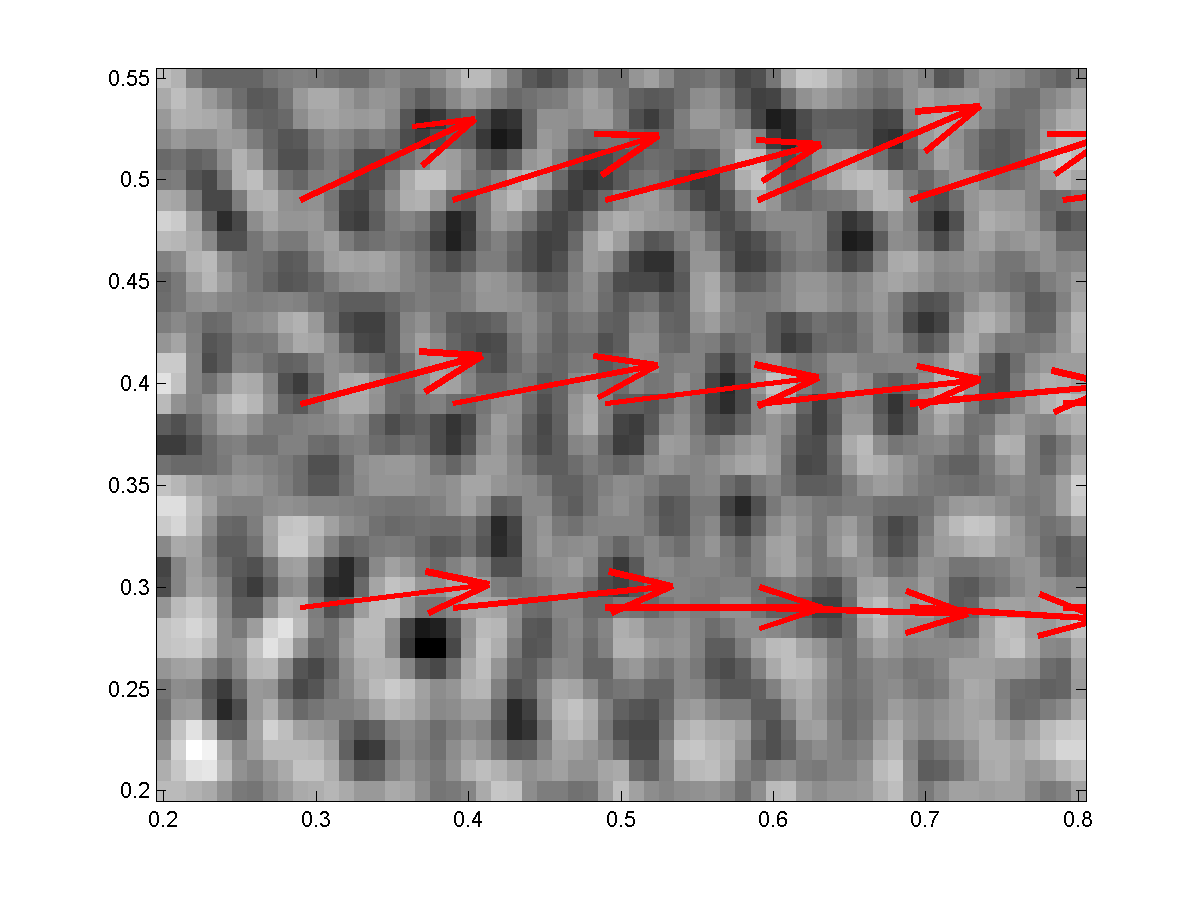}}
\\
\subfloat[Angular error]{
\includegraphics[width=4.5cm]{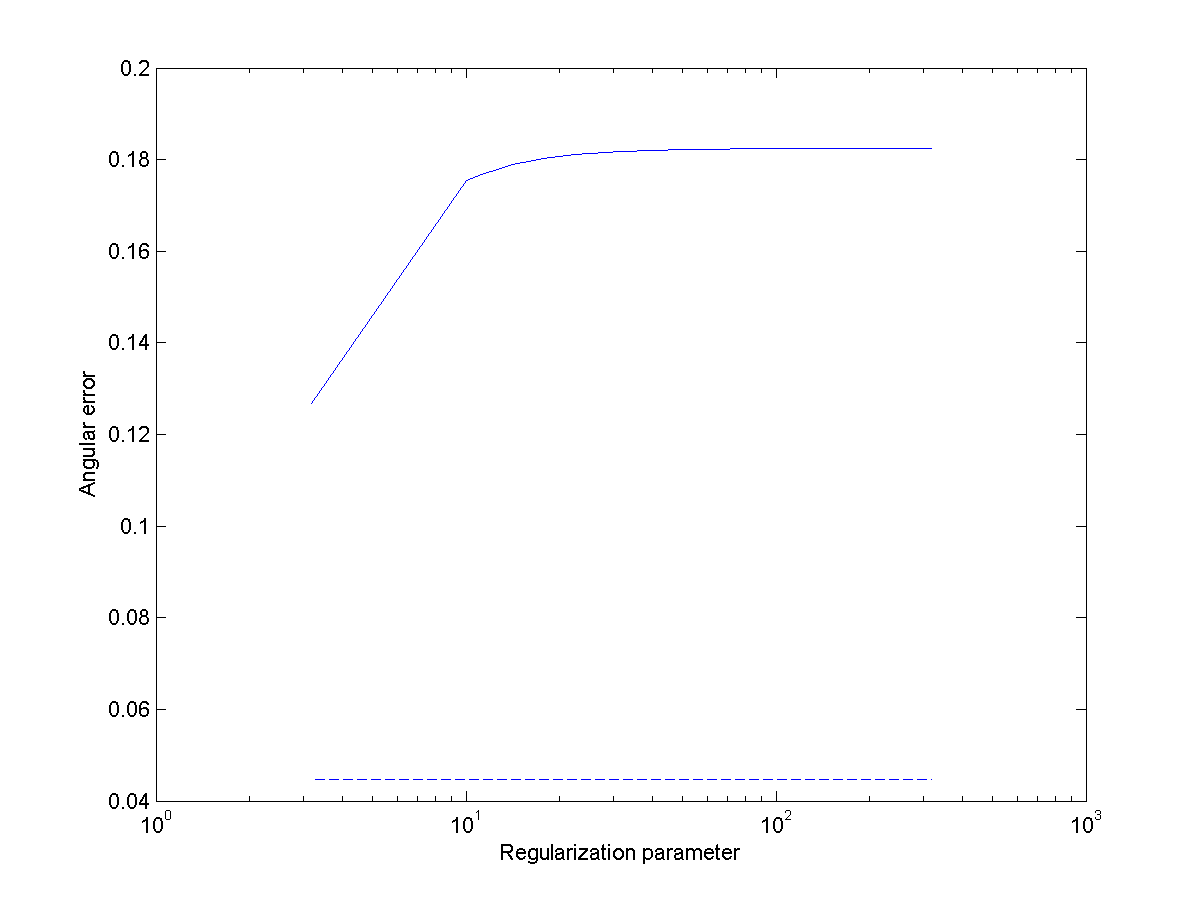}}
\subfloat[Distance error]{
\includegraphics[width=4.5cm]{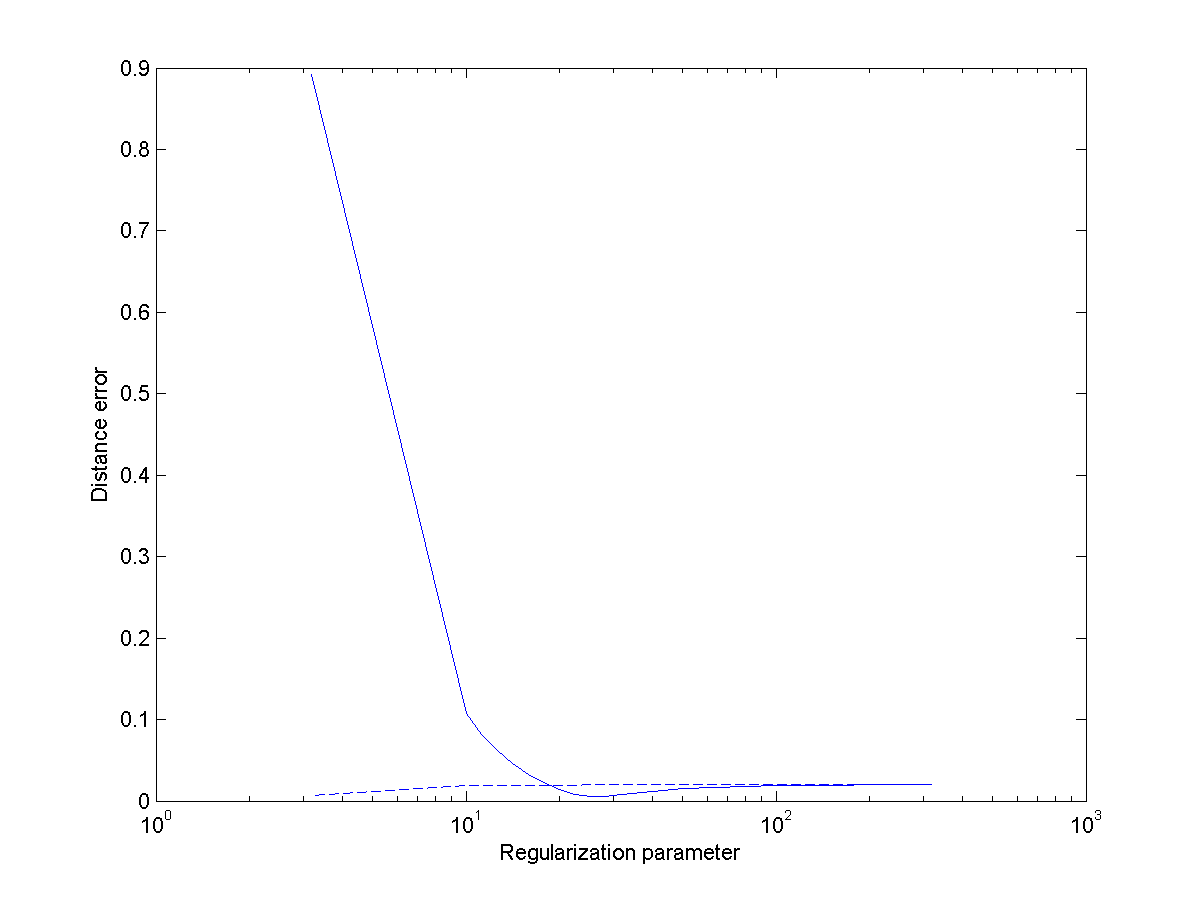}
\label{fig:Exp2DiagrB}}
\caption{(Experiment 2) (c)-(f): Computed vector fields for $\lambda=12.5893$. (g), (h): different error measures for regularization parameters $10^{0.5}\leq\lambda\leq 10^{2.5}$, 
 full line: original data; dashed line: band-limitation texture $\kappa=1.8$}
\label{fig:Exp2}
\end{figure*}

\begin{table*}[b]
\caption{Experiments 1 and 2: Error analysis}
\label{tab:Exp12}

\subfloat[Rigid experiment, $\lambda=12.5893$ (see Figure \ref{fig:Exp1})]{
\begin{tabular}{lcccc}
\hline\noalign{\smallskip}
Texture Mode & AAE & AEEabs & AEErel & Warping   \\
\noalign{\smallskip}\hline\noalign{\smallskip}
none &  0.2499 &  0.0445 &  3.3232 &  0.3739 \\
Gauss 0.3  &  0.2499 &  0.0445 &  3.3237 &  0.3797 \\
Band 0.4 &  0.1880 &  0.0101 &  0.7548 &  0.0737 \\
Band 1.8 &  0.2382 &  0.0114 &  0.8546 &  0.0345 \\
\noalign{\smallskip}\hline
\end{tabular}}\hfill
\subfloat[Non-rigid experiment, $\lambda=12.5893$ (see Figure \ref{fig:Exp2})]{
\label{tab:Exp2}
\begin{tabular}{lcccc}
\hline\noalign{\smallskip}
Texture Mode & AAE & AEEabs & AEErel & Warping   \\
\noalign{\smallskip}\hline\noalign{\smallskip}
none &  0.1780 &  0.0617 &  2.9980 &  0.4886 \\
Gauss 0.3  &  0.1784 &  0.0617 &  2.9990 &  0.4923 \\
Band 0.4 &  0.0928 &  0.0135 &  0.6595 &  0.1189 \\
Band 1.8 &  0.0447 &  0.0190 &  0.9232 &  0.0494 \\
\noalign{\smallskip}\hline
\end{tabular}}

\end{table*}

\FloatBarrier

\subsection{Material, displacement and parameters}
The synthetic material was chosen to exhibit homogeneous regions surrounded by edges. In each experiment, we evaluated a rigid deformation and a non-rigid deformation.

In Experiments 1 and 2, we use a tree structure designed by Brian Hurshman and licensed under CC BY 3.0\footnote{http://thenounproject.com/term/tree/16622/}.

\subsection{Texture Modes}
The purpose of experiments is to evaluate the influence of the texture pattern introduced by \eqref{eq:TextureConv} to the images, using the basic filtered-backprojection reconstruction of the image.

Letting $\kappa_\text{max}=10$, we choose different values of choose $\kappa_\text{min}$ and record the error with respect to the measures defined below \ref{subsec:Validation}.

On the one hand, these are compared with the motion reconstructions from the unperturbed images. On the other hand, we compare the pictures to the following \emph{Gaussian texture} explained in the sequel.

We will produce the Gaussian texture in the image as follows: We take an image $f(\b x)$ and apply Gaussian noise to the image $f$, producing a reference image 
\begin{equation}
\label{eq:TextureApplication}
	f_1(\b x)=f(\b x)+\alpha\;r(\b x),
\end{equation}
where $\alpha$ is a constant and $r(\b x)$ is a noise function governed by Gaussian noise. Then we warp the textured image using a certain vector field $\b u$, producing the reference image $f_1$ and the target image 
\begin{equation}
\label{eq:Warping}
	f_2(\b x)=f_1(\b x+\b u(\b x)).
\end{equation}
At last, we compute the optical flow between $f_1$ and $f_2$ and then determine whether the computed field is approximately the correct motion field.

We emphasize that due to \eqref{eq:TextureApplication} and \eqref{eq:Warping}, the artificial texture pattern thus introduced behaves like a material characteristic which is advected by the vector field $\b u$.

In contrast to that, the in using the texture method~\eqref{eq:TextureConv}, we have recourse to the principle outlined in section \ref{sec:MotionEst}: to arrive at the target image $f_2$
\begin{itemize}
	\item (sec. \ref{sec:MotionEst}, step b) the mechanical deformation is applied
	\item (sec. \ref{sec:MotionEst}, step c) the texture method is applied.
\end{itemize}

\subsection{Validation}
\label{subsec:Validation}
The field which is computed with the optical flow algorithm should approximately match the correct motion field. In order to study how PAI and textured PAI images behave under mechanical deformations, we adopt the following validation procedure:
\paragraph{Synthectic Data verification}
\begin{itemize}
			\item Choose a particular vector field $\b u_0$, as well as a reference image $f_1$
			\item Compute the warped image $f_2$ by interpolation, i.e. $f_2= f_1(\b x + \b u_0(\b x))$
			\item Compute the optical flow $\b u(\b x)$ from $f_1$ and $f_2$
			\item Compare the result $\b u$ against the ground-truth vector field $\b u_0$
		\end{itemize}

\paragraph{Error measures}	
To compare the computed flows produced to the ground truth field, we use the angular and distance error, and to assess the prediction quality of the flow, we calculate the warping error. To define these error measures, write
\[
\begin{aligned}
	\b u_0(\b x) &= r_0(\b x)\; e^{i\varphi_0(\b x)}\\
	\b u(\b x) &= r(\b x)\; e^{i\varphi(\b x)}.
\end{aligned}
\]
Then we define the
\begin{itemize}
	\item average angular error (AAE) \[
	\int_\Omega|\varphi(\b x) - \varphi_0(\b x)|d\b x\]
	\item average endpoint error (AEE) \[\int_\Omega \|\b u - \b u_0\| d\b x\]
	\item average relative endpoint error (AEErel)
	\[\int_\Omega \frac{1}{\|\b u_0\|}\|\b u - \b u_0\| d\b x\]
	\item warping error
	\[
		\int_\Omega \|f_2(\b x)  - f_1(\b x + \b u(\b x))\| d\b x.
	\]

\end{itemize}

\section{Discussion}
\label{sec:Discussion}
As mentioned in the introduction, elastography often relies on speckle tracking methods, including correlation techniques and optical flow. It is clear that such methods have a problem with homogeneous regions. As for the optical flow, this can be seen from \eqref{eq:OptFlow}, where the data term for homogeneous regions gives no information.

In Experiments 1-4, we used several pieces of synthetic data showing homogeneous regions and investigated the effect of the homogeneity in several regions of the data (see Figs. \ref{fig:Exp1M}-\ref{fig:Exp6M}.

The visualization of the computed motion fields in Figs. \ref{fig:Exp1n}, \ref{fig:Exp2n}, \ref{fig:Exp5n} and \ref{fig:Exp6n} shows aberrations from the respective ground truth fields. Comparing the values for the angular, distance and warping errors in Table \ref{tab:Exp12} to \ref{tab:Exp56} shows these aberrations, if one restricts to the untextured original images.

We then applied the texture generation methods introduced in Section \ref{sec:MotionEst}. The results in section \ref{sec:Experiments} show that addition of texture is able to alleviate this problem of homogeneous regions to a considerable amount. The effect shows up in the different error types.

For the specimens we used, the angular error decreases about 20-30 \% compared to the original error, and in extreme cases the decrease is as high as 75 \% (as seen from Table \ref{tab:Exp2}). As seen from Figs. \ref{fig:Exp1DiagrB}-\ref{fig:Exp6DiagrB}, where the errors were plotted as a function of the regularization parameters, the distance error and in the warping error reach their minimum in the textured variant at lower regularization values than the original data. In this context, we note that the angular error is a monotonically increasing function of the regularization parameters in the cases we investigated. In some cases (as seen from Fig. \ref{fig:Exp1DiagrB} and Fig. \ref{fig:Exp5DiagrB}), the textured versions give also a lower distance error for the optimal regularization value; in other cases, with the motion estimation we used, the distance error is about the same magnitude as in the original versions.

The optimum frequency windows for the texture generating method also seem to differ for the rigid and the non-rigid deformations we used. Whereas for the rigid deformations, the window with $\kappa_\text{min}=0.4$ gave better results, the non-rigid deformations gave better results with $\kappa_\text{min}=1.8$.

The effect of adding texture seems to come from a filling-in-effect in the optical flow equation \eqref{eq:OptFlow}. Although the regularization term is responsible for such an interpolation usually, here this filling-in-effect originates from the data term; the function $\Psi$ seems to propgate the information from within the objects out across the edges and boundaries. This seems also to alleviate the aperture problem in optical flow, as the new texture creates also new gradients around edges. This may account for the lessening of the angular error.

Overall, the results point at the phenomenon that an effect which has deteriorating the image quality in one contrast (here the photoacoustic contrast) can have an advantageous effect on another contrast (here the mechanical contrast, which is inherent in the displacement~$\b u$).

\section{Conclusion}
\label{sec:Conclusion}
We studied the topic of texture generation in photoacoustics, and applied bandwidth filter techniques for generating such texture in the reconstructed images. This kind of texture was mathematically characterized. Then we tested an application of the PAI texture for elastography purposes. It turned out that the texture generation technique has the potential to fill in otherwise untextured regions. The displacements can be better measured then, making photoacoustic elastography viable.

\begin{figure*}[h!]
\subfloat[Visualized mask]{
\includegraphics[width=4.5cm]{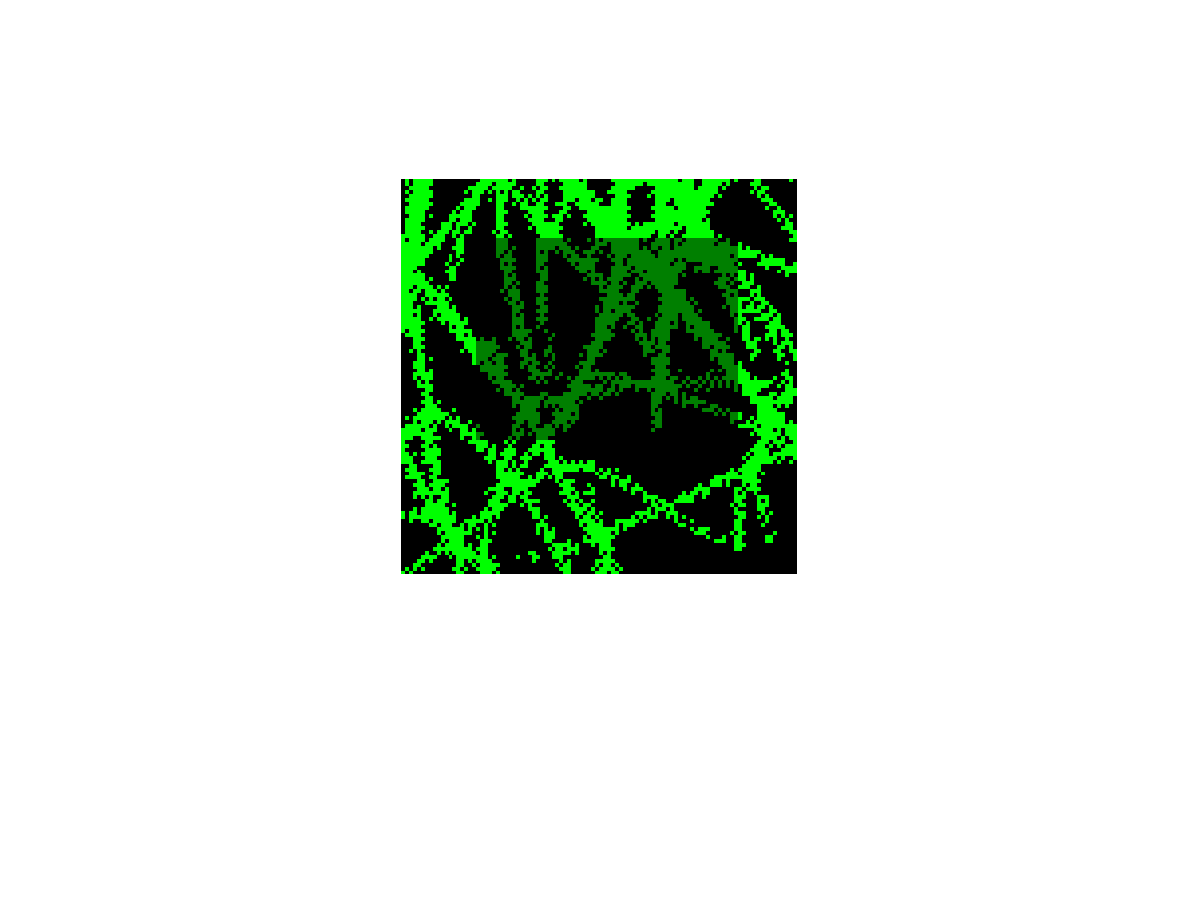}
\label{fig:Exp5M}}
\subfloat[Ground truth]{
\includegraphics[width=4.5cm]{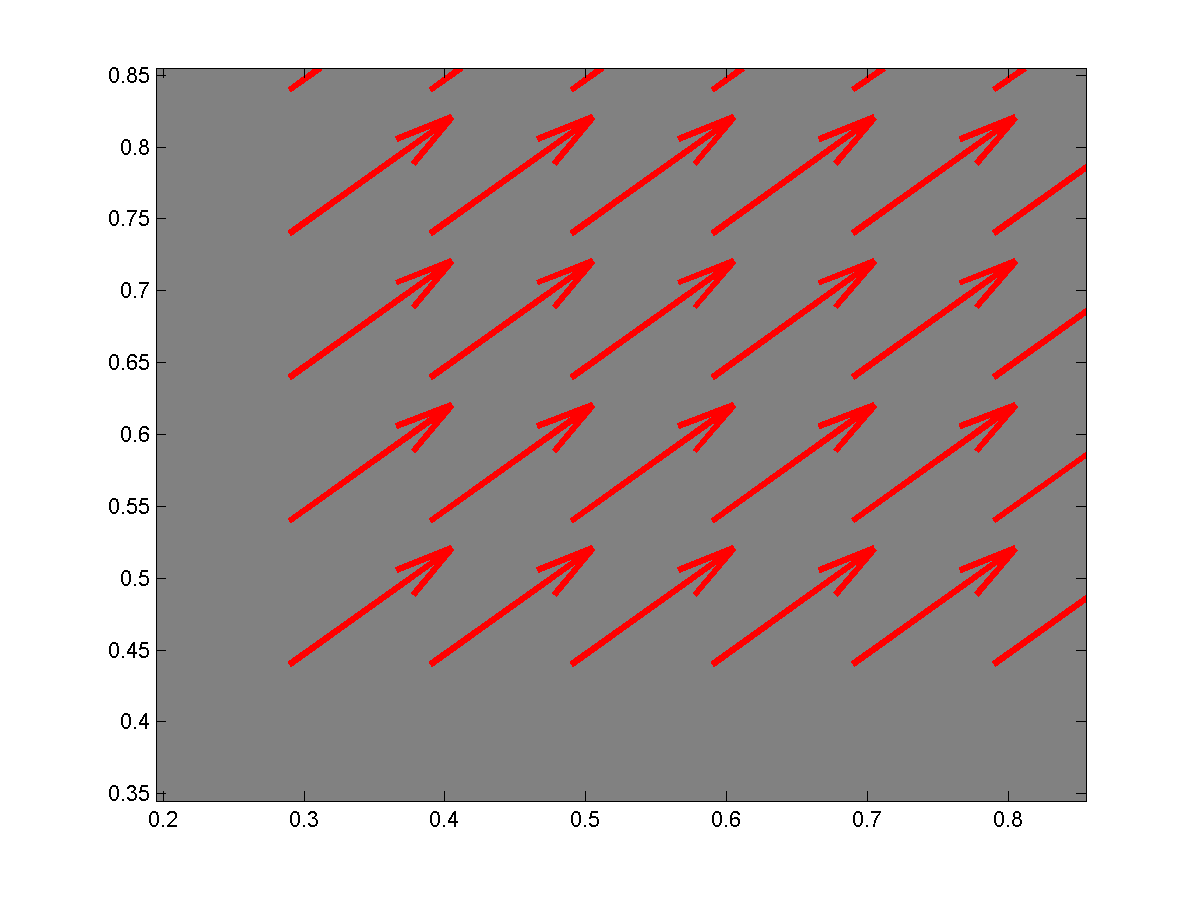}}
\\
\subfloat[none]{
\includegraphics[width=4.5cm]{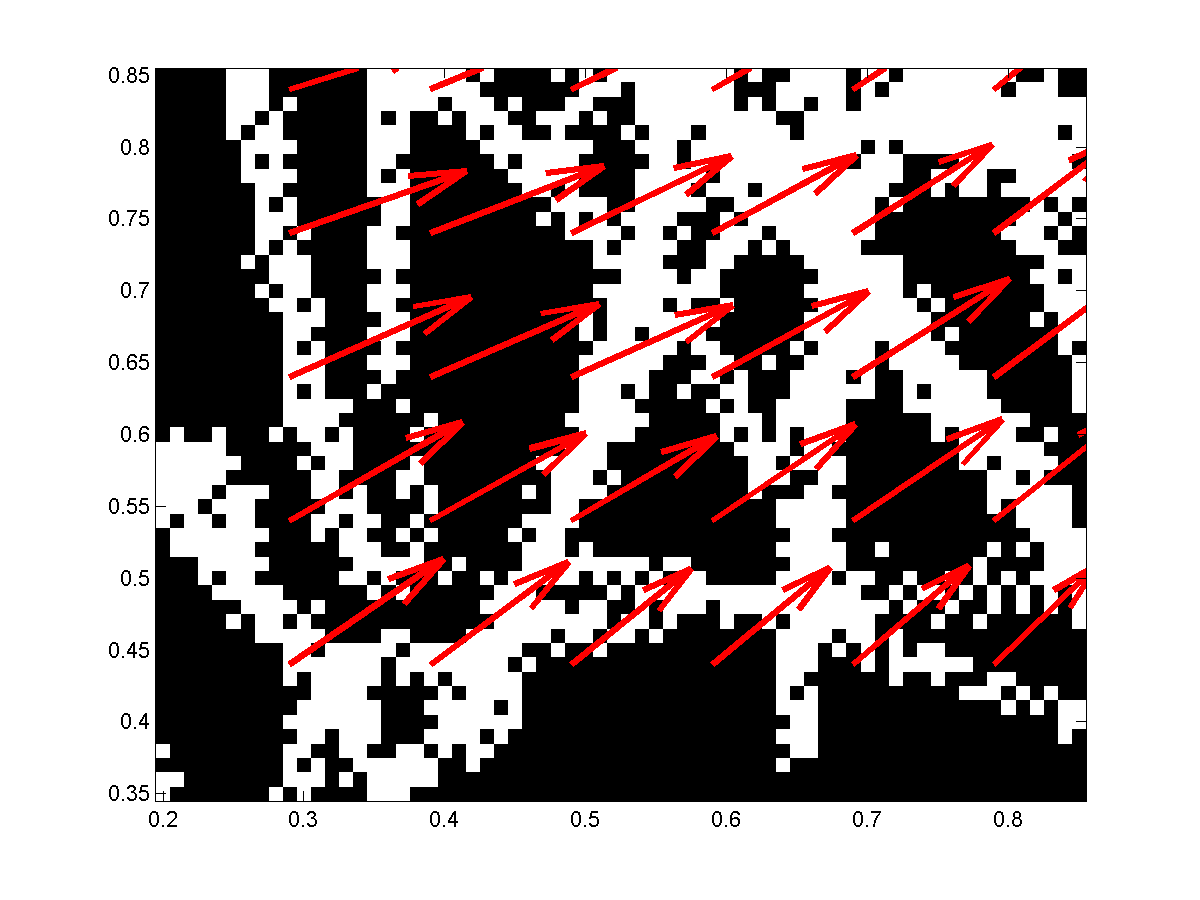}
\label{fig:Exp5n}}
\subfloat[Gauss $\alpha=0.3$ ]{
\includegraphics[width=4.5cm]{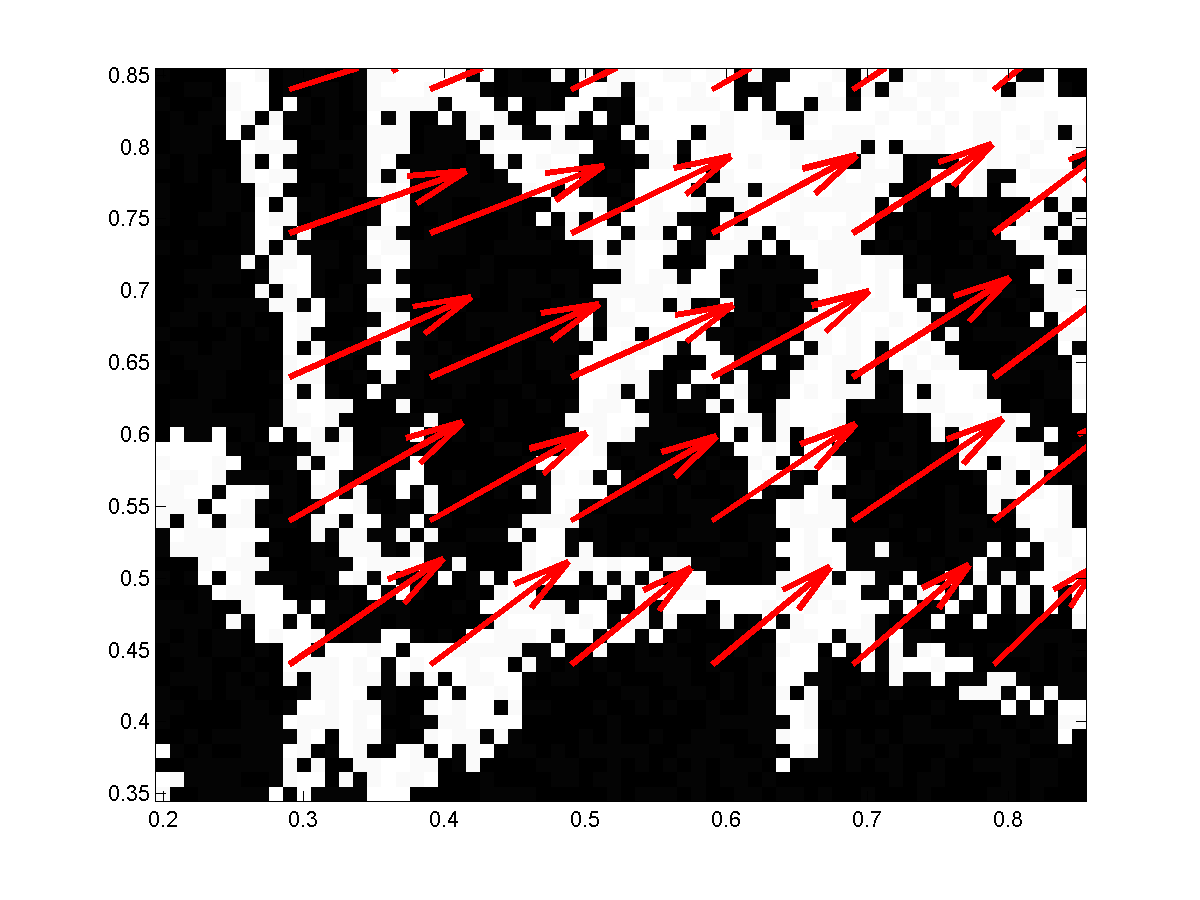}}
\\
\subfloat[Band  $\kappa_\text{min}=0.4$]{
\includegraphics[width=4.5cm]{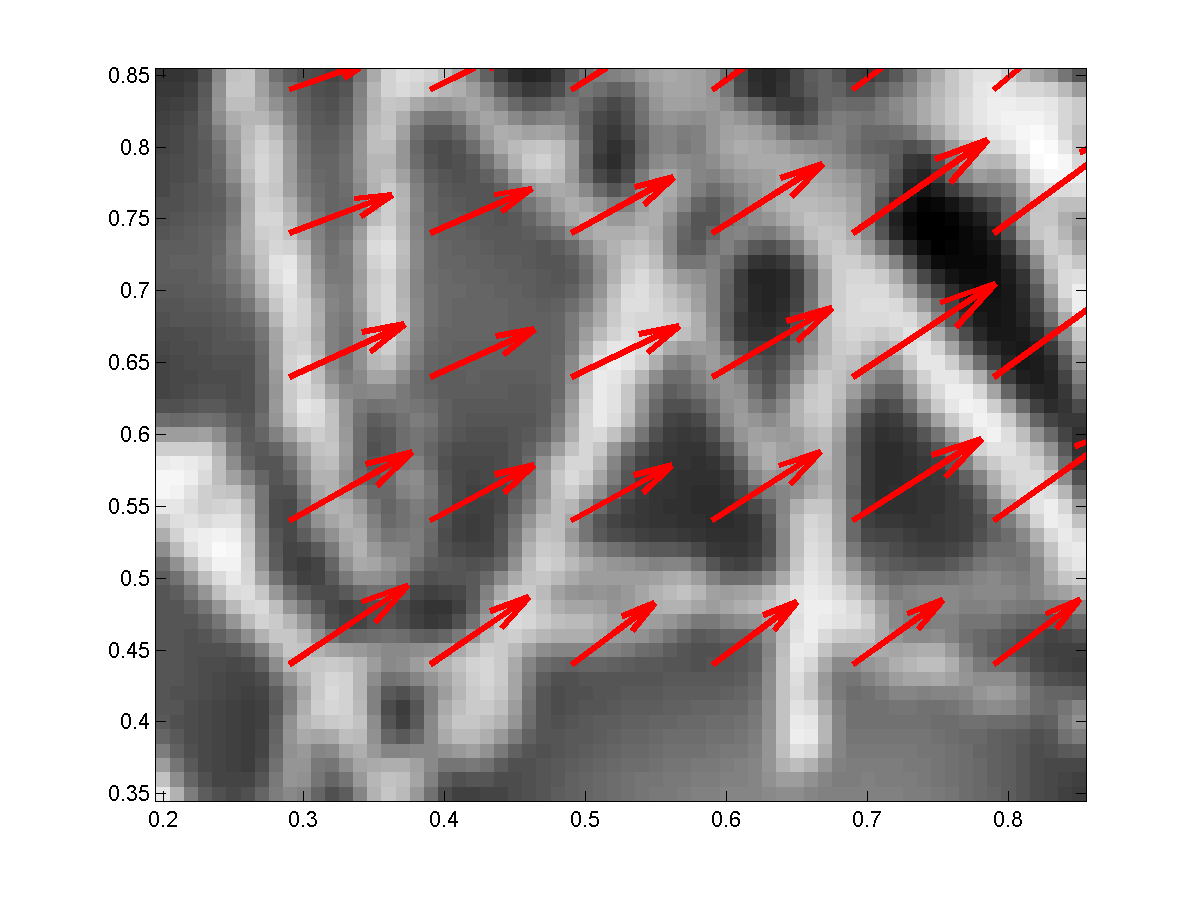}}
\subfloat[Band $\kappa_\text{min}=1.8$]{
\includegraphics[width=4.5cm]{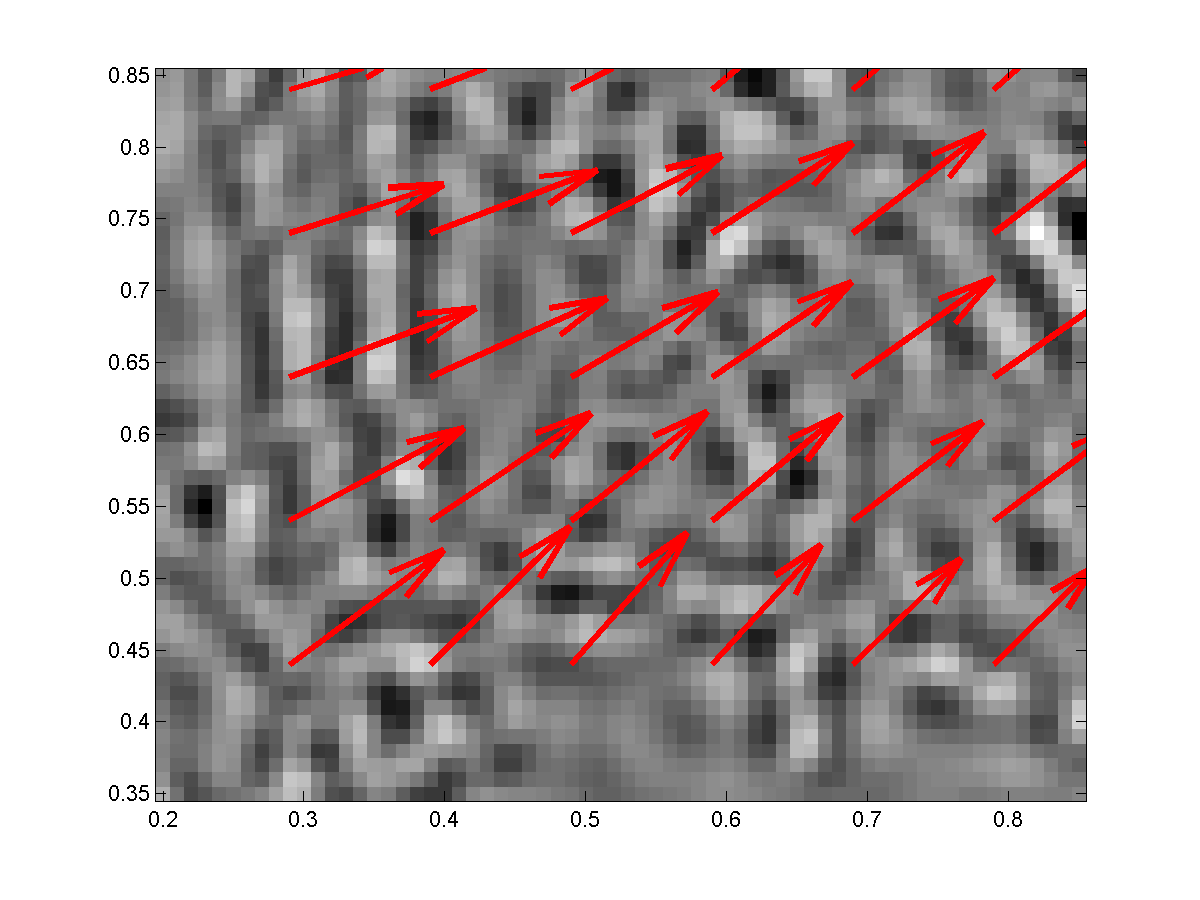}}
\\
\subfloat[Angular error]{
\includegraphics[width=4.5cm]{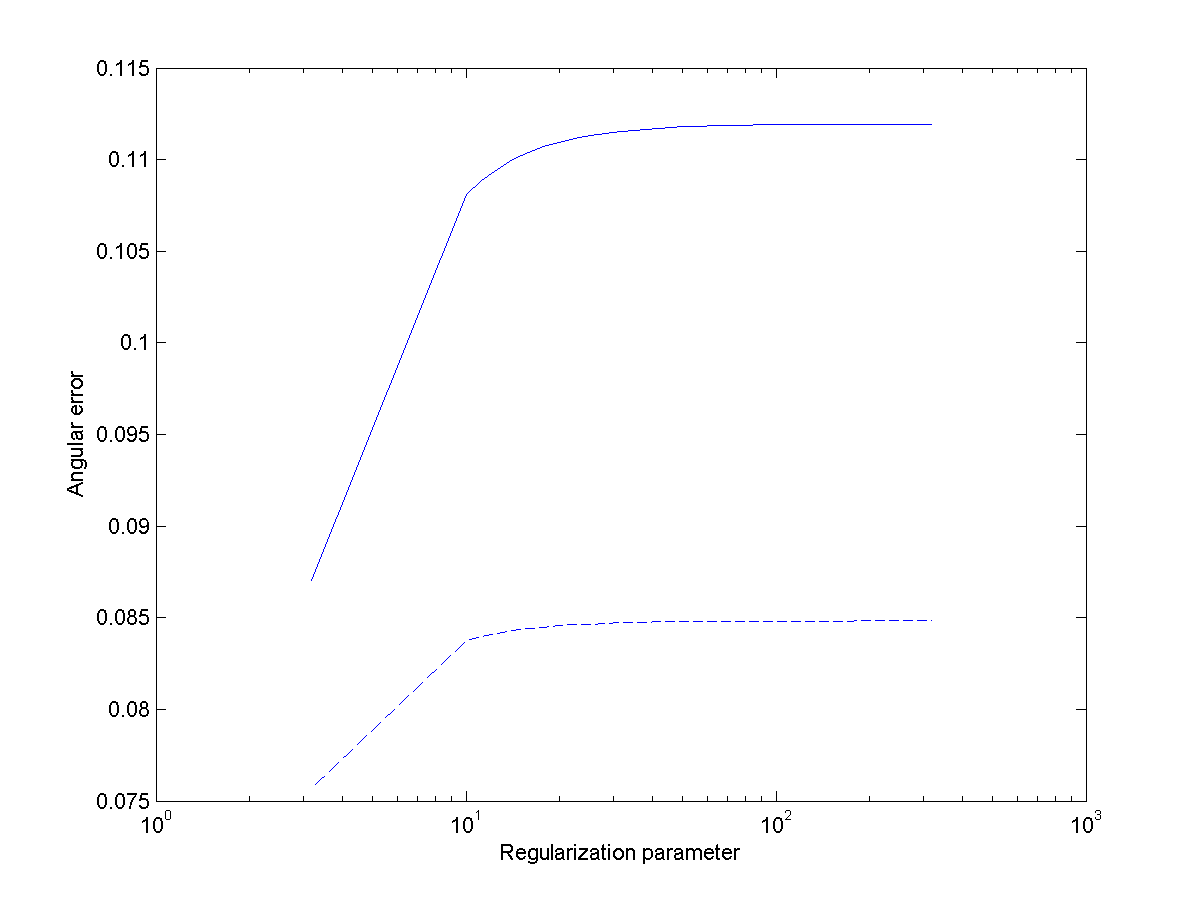}}
\subfloat[Distance error]{
\includegraphics[width=4.5cm]{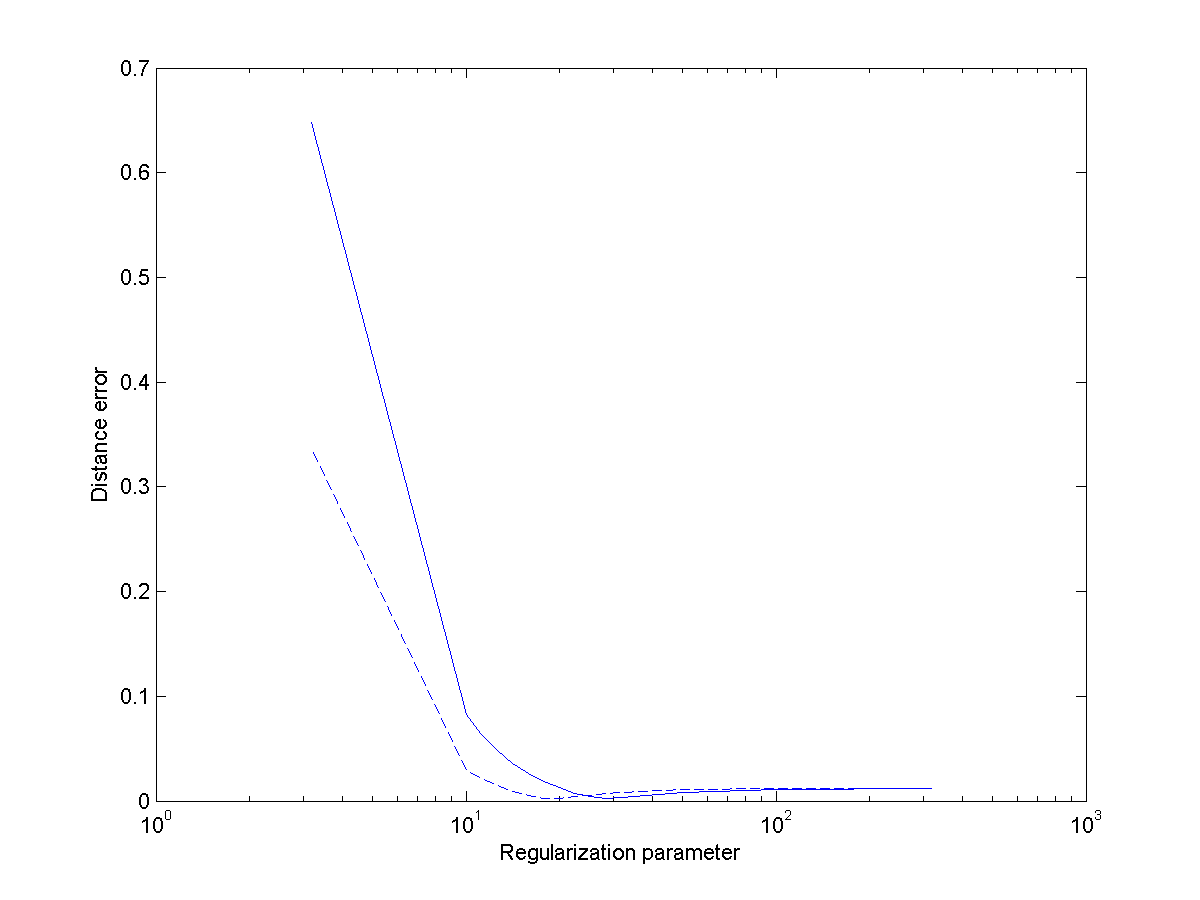}
\label{fig:Exp5DiagrB}}

\caption{(Experiment 3) (c)-(f): Computed vector fields for $\lambda=11.2202$. (g), (h): different error measures for regularization parameters $10^{0.5}\leq\lambda\leq 10^{2.5}$, 
 full line: original data; dashed line: band-limitation texture $\kappa=0.4$}
\label{fig:Exp5}
\end{figure*}

\begin{figure*}[h!]
\subfloat[Visualized mask]{
\includegraphics[width=4.5cm]{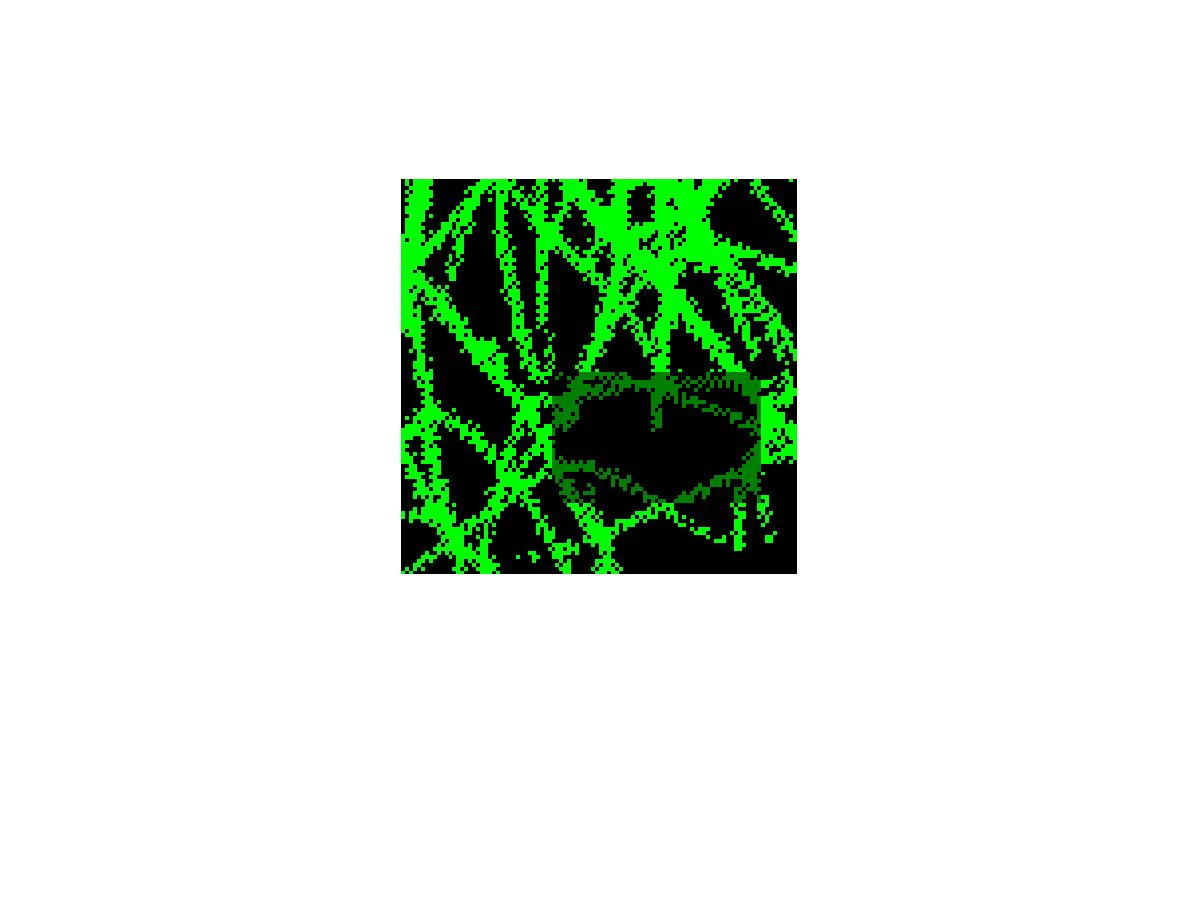}
\label{fig:Exp6M}}
\subfloat[Ground truth]{
\includegraphics[width=4.5cm]{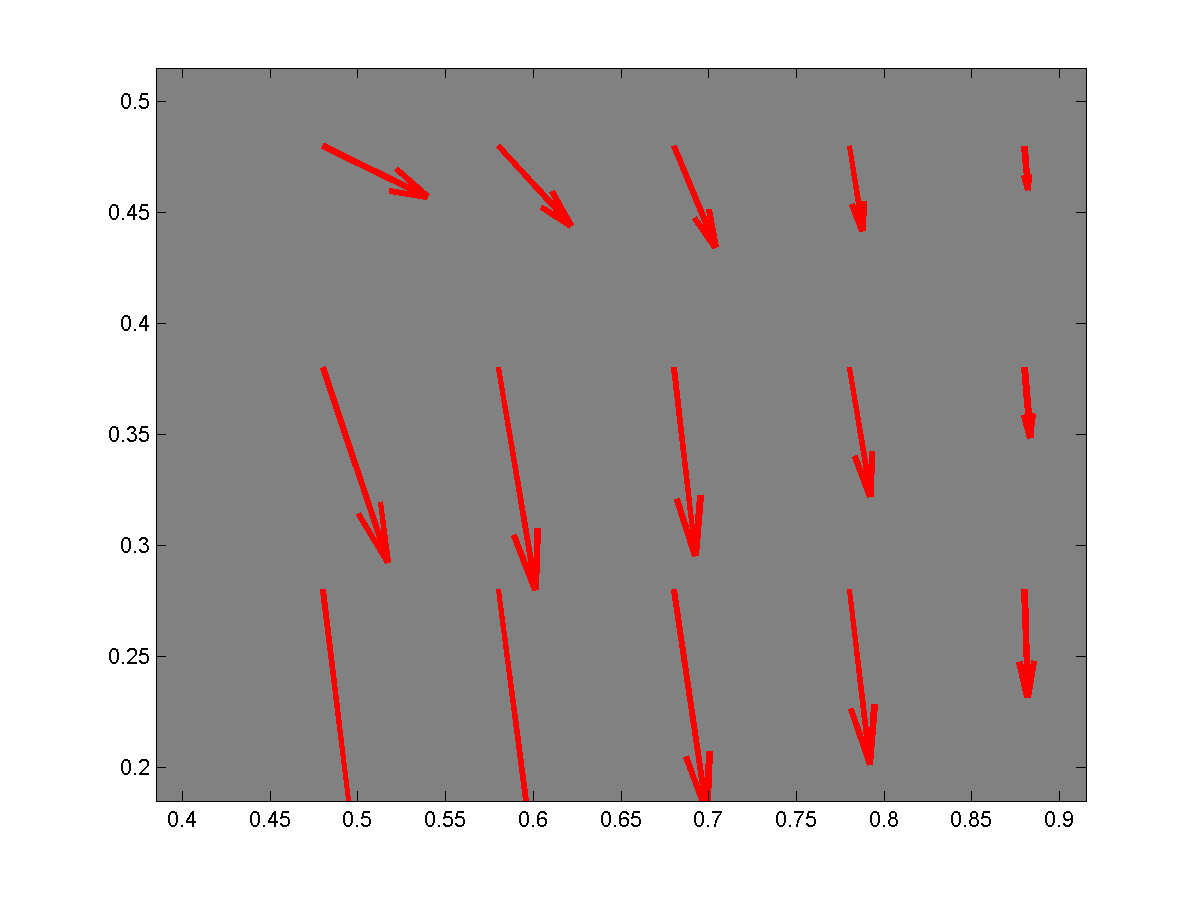}}
\\
\subfloat[none]{
\includegraphics[width=4.5cm]{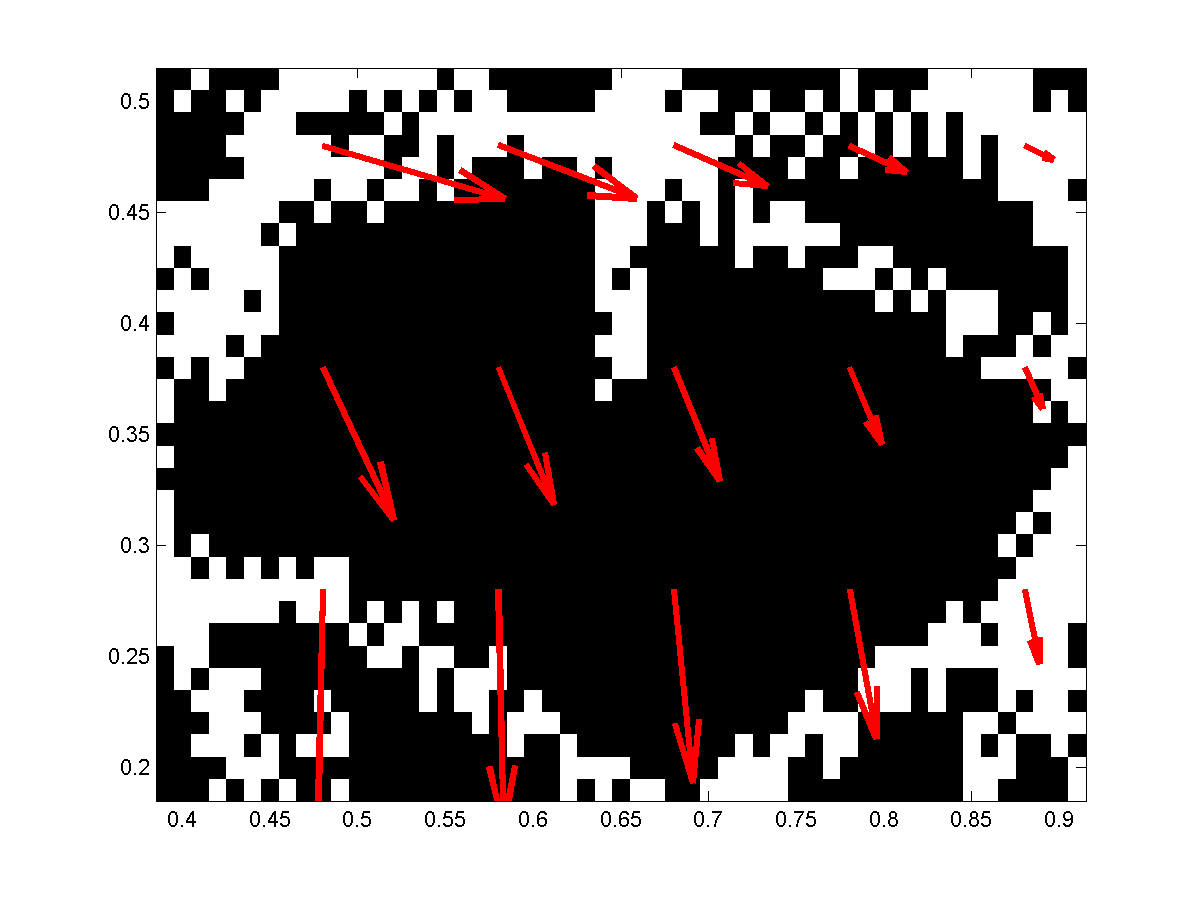}
\label{fig:Exp6n}}
\subfloat[Gauss 0.3 ]{
\includegraphics[width=4.5cm]{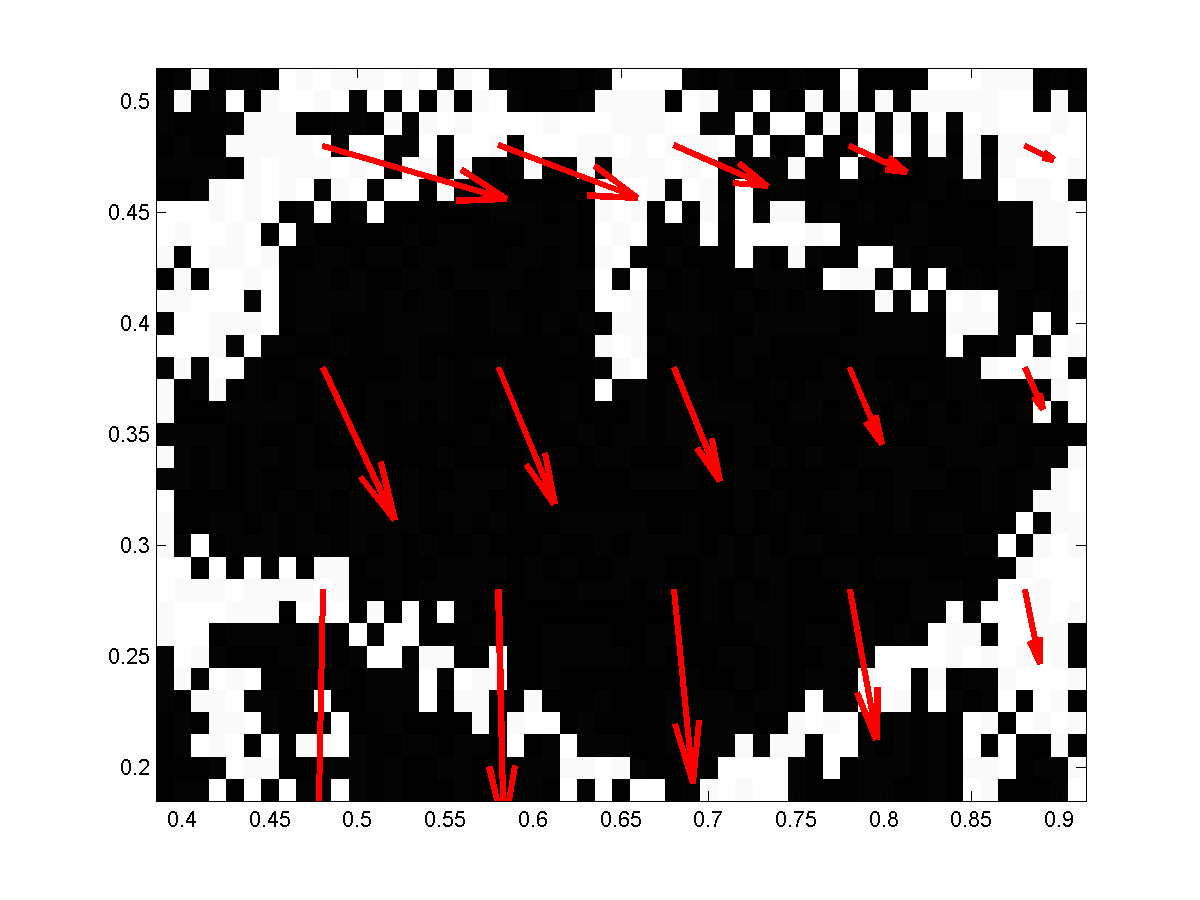}}
\\
\subfloat[Band 0.4]{
\includegraphics[width=4.5cm]{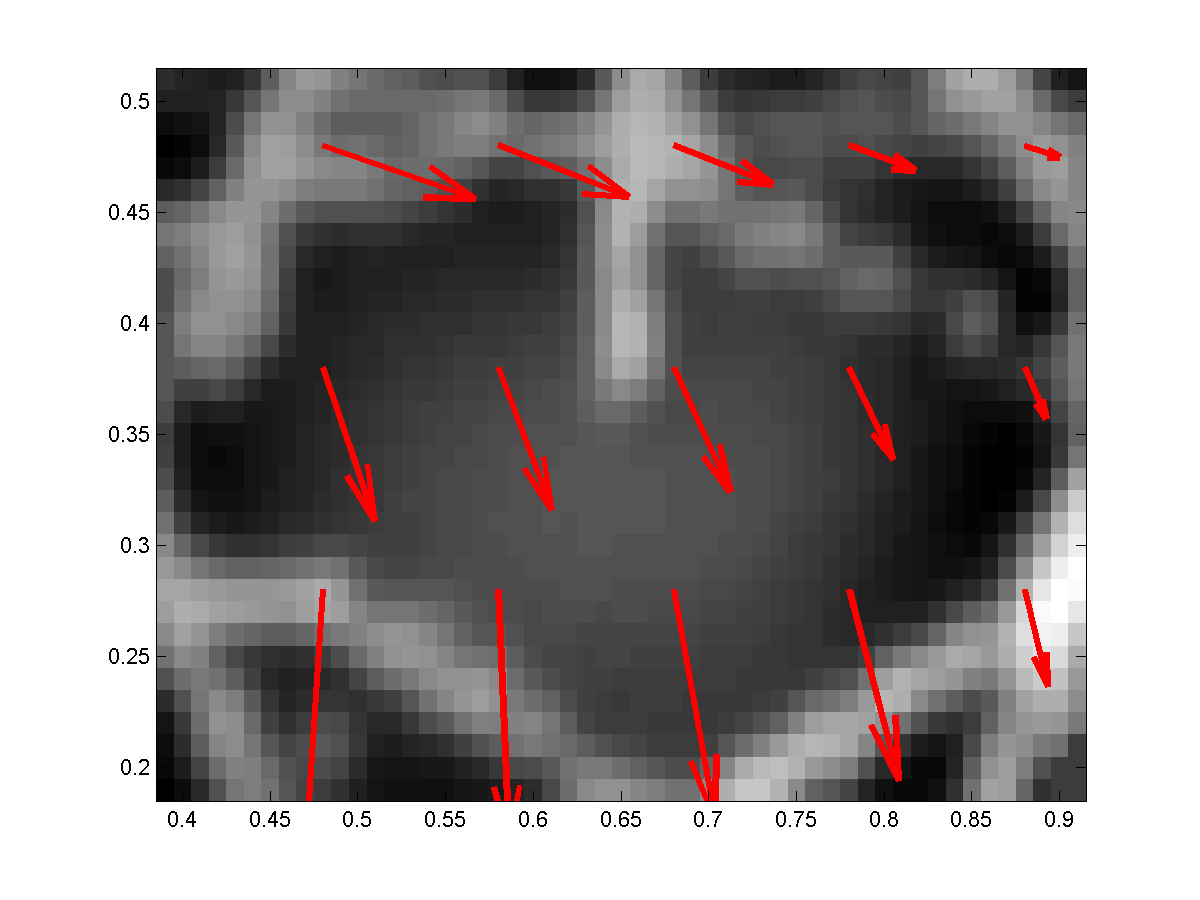}}
\subfloat[Band 1.8]{
\includegraphics[width=4.5cm]{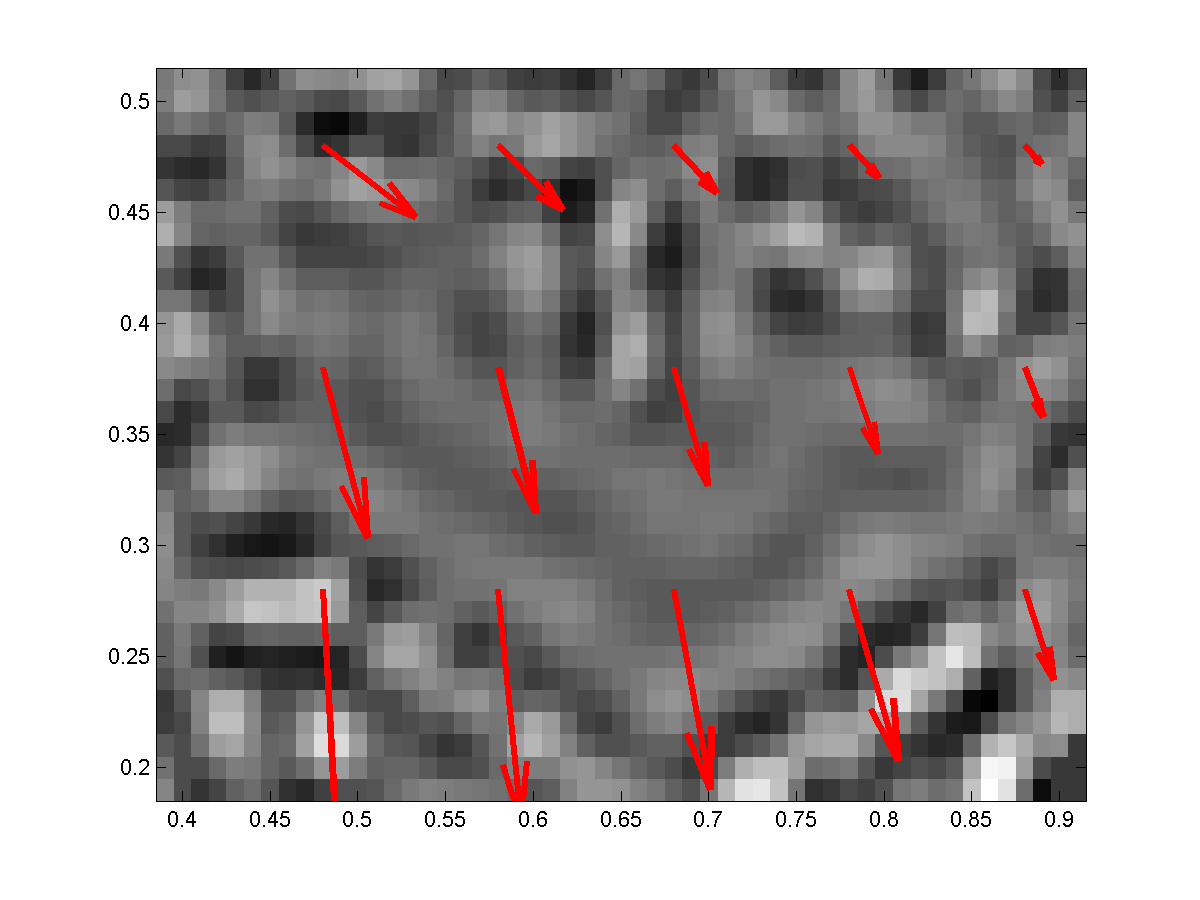}}
\\
\subfloat[Angular error]{
\includegraphics[width=4.5cm]{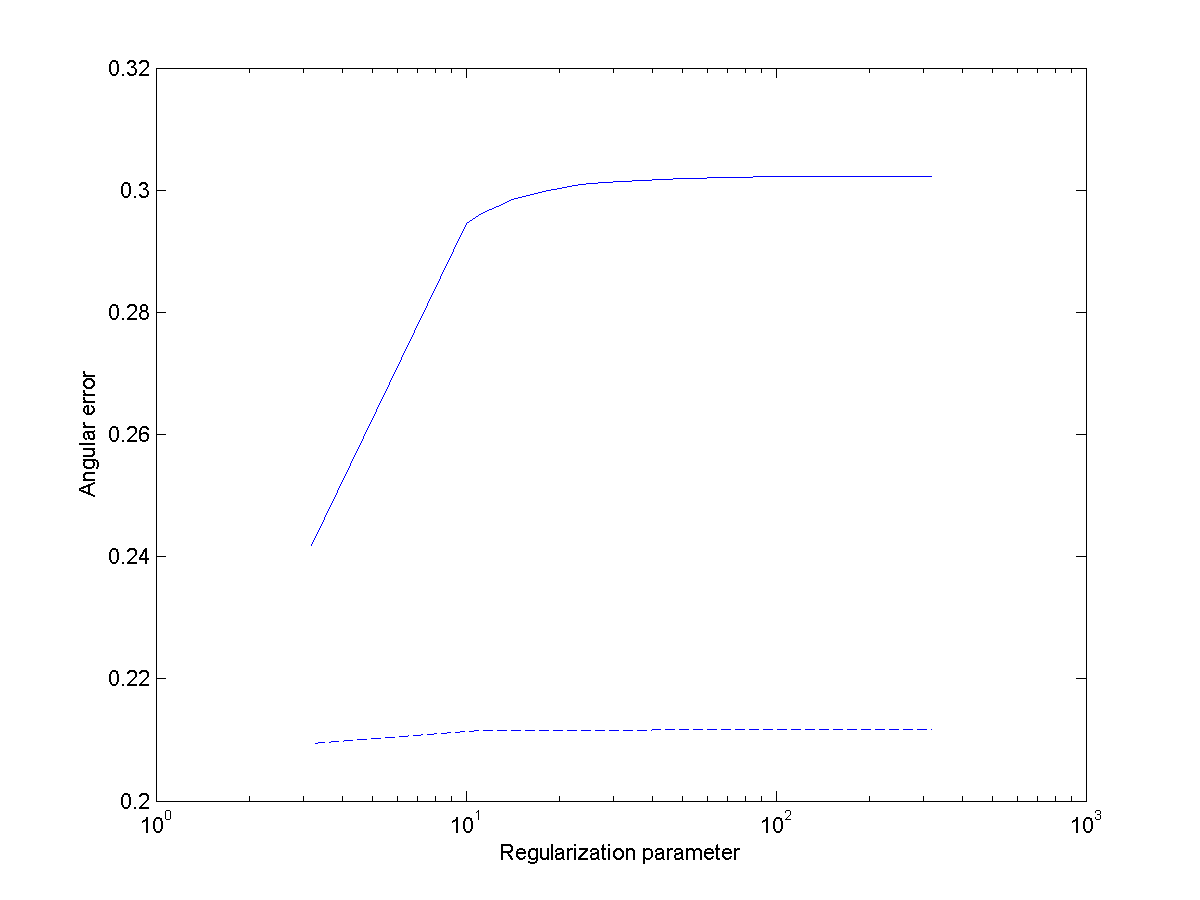}}
\subfloat[Distance error]{
\includegraphics[width=4.5cm]{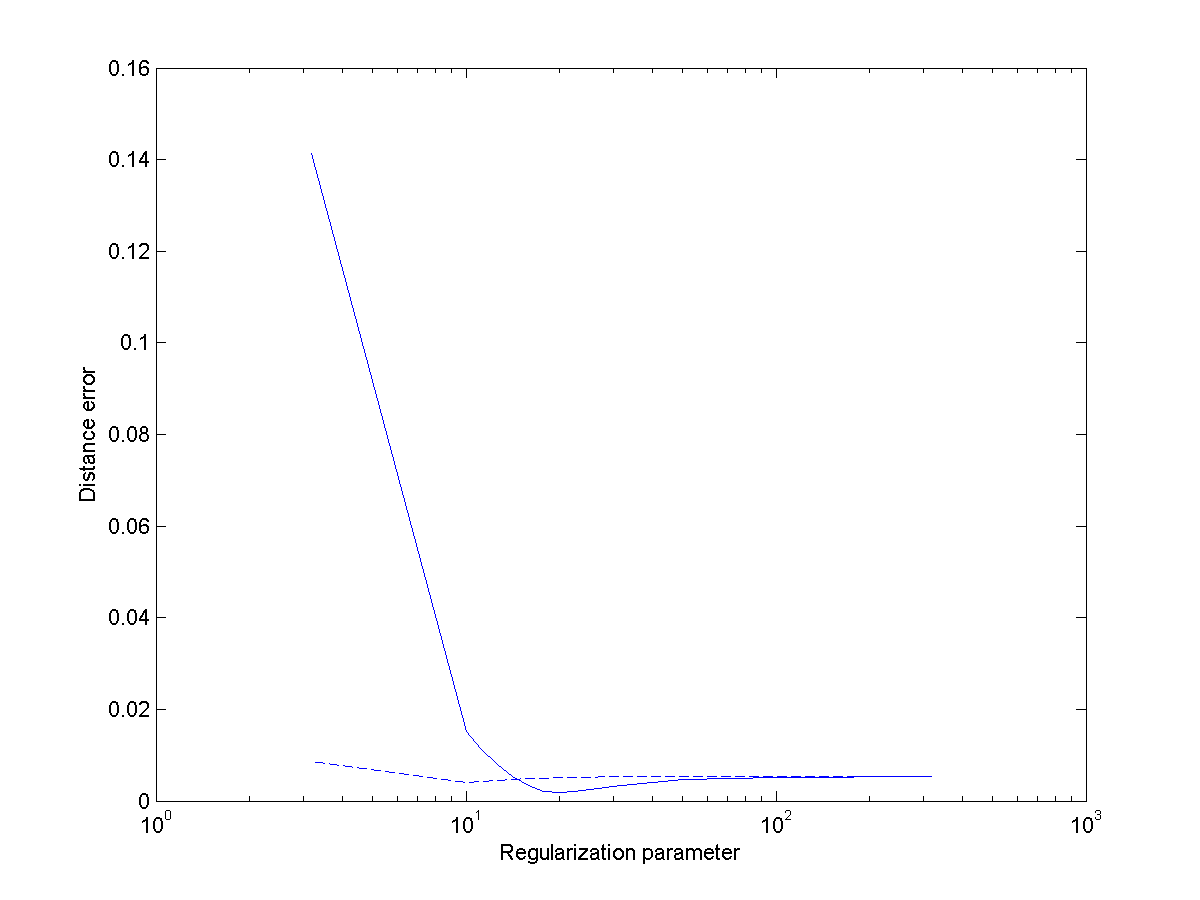}
\label{fig:Exp6DiagrB}}
\caption{(Experiment 4) (c)-(f): Computed vector fields for $\lambda=11.2202$. (g), (h): different error measures for regularization parameters $10^{0.5}\leq\lambda\leq 10^{2.5}$, 
 full line: original data; dashed line: band-limitation texture $\kappa=1.8$}
\label{fig:Exp6}
\end{figure*}

\begin{table*}[b]
\caption{Experiments 3 and 4: Error analysis}
\label{tab:Exp56}

\subfloat[Rigid experiment, $\lambda=11.2202$ (see Figure \ref{fig:Exp5})]{
\begin{tabular}{lcccc}
\hline\noalign{\smallskip}
Texture Mode & AAE & AEEabs & AEErel & Warping   \\
\noalign{\smallskip}\hline\noalign{\smallskip}
none &  0.1089 &  0.0635 &  4.7460 &  0.4382 \\
Gauss 0.3  &  0.1090 &  0.0635 &  4.7465 &  0.4430 \\
Band 0.4 &  0.0840 &  0.0211 &  1.5807 &  0.1753 \\
Band 1.8 &  0.1344 &  0.0094 &  0.7024 &  0.0533 \\
\noalign{\smallskip}\hline
\end{tabular}}\hfill
\subfloat[Non-rigid experiment, $\lambda=11.2202$ (see Figure \ref{fig:Exp6})]{
\begin{tabular}{lcccc}
\hline\noalign{\smallskip}
Texture Mode & AAE & AEEabs & AEErel & Warping   \\
\noalign{\smallskip}\hline\noalign{\smallskip}
none &  0.2962 &  0.0112 &  1.3676 &  0.2053 \\
Gauss 0.3  &  0.2956 &  0.0112 &  1.3680 &  0.2127 \\
Band 0.4 &  0.3251 &  0.0026 &  0.3236 &  0.0191 \\
Band 1.8 &  0.2115 &  0.0043 &  0.5228 &  0.0216 \\
\noalign{\smallskip}\hline
\end{tabular}}

\end{table*}

\FloatBarrier

\paragraph{Acknowledgements.}
We thank Joyce McLaughlin, Paul Beard and Ben Cox for helpful discussions and acknowledge support from the Austrian Science Fund (FWF) in projects S10505-N20 and P26687-N25.

\def\cprime{$'$}
  \providecommand{\noopsort}[1]{}\def\ocirc#1{\ifmmode\setbox0=\hbox{$#1$}\dimen0=\ht0
  \advance\dimen0 by1pt\rlap{\hbox to\wd0{\hss\raise\dimen0
  \hbox{\hskip.2em$\scriptscriptstyle\circ$}\hss}}#1\else {\accent"17 #1}\fi}
  \def\cprime{$'$}

\bibliographystyle{spmpsci}      


\end{document}